\documentclass[10pt,a4paper]{article}
\usepackage[T1]{fontenc}

\usepackage[utf8]{inputenc}
\usepackage{authblk}
\usepackage{amsmath,amssymb,amsthm,mathtools,esint,bm,bbm}%,mathrsfs
\usepackage{mathrsfs}
\usepackage{bookmark}
\usepackage{amsmath}  

\usepackage{color}
\usepackage{framed}
\usepackage{enumitem}
\allowdisplaybreaks[3]
\newtheorem{definition}{Definition}[section]
\newtheorem{theorem}[definition]{Theorem}
\newtheorem{lemma}[definition]{Lemma}

\newtheorem{remark}[definition]{Remark}
\numberwithin{equation}{section}
\providecommand{\abs}[1]{\left|#1\right|}%绝对值符号
\providecommand{\norm}[1]{\lVert#1\rVert}%范数符号
\providecommand{\paren}[1]{\left( #1 \right)}%适应行高的小括号
\providecommand{\baren}[1]{\left\{ #1 \right\}}%适应行高的大括号
\newcommand{\uep}{\ensuremath{u^\varepsilon}}
\newcommand{\uepi}{\ensuremath{u^{\varepsilon_i}}}

\newcommand{\bigqep}{\ensuremath{Q^\varepsilon}}

\renewcommand{\r}{{\mathbb{R}}}
\newcommand{\n}{{\mathbb{N}}}
\newcommand{\sd}{{\mathbb{S}^{d-1}}}
\newcommand{\rd}{{\mathbb{R}^d}}

\newcommand{\zd}{{\mathbb{Z}^d}}

\newcommand{\ard}{\ensuremath{L^2(\rd)}}

\newcommand{\qqrd}{\ensuremath{L^{q}(\rd)}}

\newcommand{\ardloc}{\ensuremath{L^2_{\mathrm{loc}}(\rd)}}

\newcommand{\varezero}{\ensuremath{\varepsilon\rightarrow0}}

\newcommand{\hnorm}{\ensuremath{H^{\alpha/2}(\rd)}}

\newcommand{\hsp}{\hspace*{0.25cm}}%空格

\newcommand{\rmd}{\ensuremath{\,\mathrm{d}}}%积分全微分符号d
\newcommand{\ccinfty}{\ensuremath{C_c^\infty(\rd)}}

\newcommand{\linftyrd}{\ensuremath{L^\infty(\rd)}}

\newcommand{\lvar}{\ensuremath{L^\varepsilon}}

\newcommand{\intrd}{\ensuremath{\int_{\mathbb{R}^d}}}%Rd上积分
\newcommand{\iintrdd}{\ensuremath{\iint_{\mathbb{R}^d\times\mathbb{R}^d}}}%RdxRd上积分
\newcommand{\blv}{\ensuremath{\Lambda^\varepsilon}}

\newcommand{\ndv}{\ensuremath{N_\delta(\varepsilon)}}
\newcommand{\ikv}{\ensuremath{I_k(\varepsilon)}}

\DeclareMathOperator{\supp}{supp}
\newcommand{\esssup}{\operatorname*{ess\,sup}}%本性上界
\usepackage{hyperref}
\title{Periodic  homogenization of convolution type operators with irregular L\'{e}vy type tails}

\author[a]{Xiaofeng Jin}
\author[a]{Wentao Huo}
\author[b]{Lingwei Ma}
\author[b]{Zhenqiu Zhang\thanks{Corresponding author.}}

\affil[a]{School of Mathematical Sciences, Nankai University, Tianjin, 300071, P.~R.~China}
\affil[b]{School of Mathematical Sciences and LPMC, Nankai University, Tianjin, 300071, P.~R.~China}

\date{}
\begin{document}
\maketitle
\footnotetext[1]{E-mail: 1120220040@mail.nankai.edu.cn (X. Jin), 1120240051@mail.nankai.edu.cn
(W. Huo),
 malingwei@nankai.edu.cn (L. Ma), 
zqzhang@nankai.edu.cn (Z. Zhang).} 
\begin{abstract} 
We establish the homogenization results for a class of nonlocal operators of convolution type with integrable jumping kernel $p$ multiplied by rapidly oscillating periodic or locally periodic coefficients. The associated measure $p(z)\rmd z$ is assumed to belong to the domain of attraction of a symmetric $\alpha$-stable law.  We also assume that $p$ satisfies a pointwise L\'evy type lower bound and an averaged annular upper bound for points bounded away from the origin, and that the local $L^1$ oscillation of $p$ decays faster at infinity than its local $L^1$-norm. Under these assumptions, we prove the resolvent convergence of the nonlocal operators and explicitly determine the corresponding homogenized nonlocal operator, which is shown to be comparable to the fractional Laplacian. The proof relies on compactness arguments and a refined analysis based on the annular integral upper bound and an $\varepsilon$-cube decomposition.
\end{abstract}

\textbf{Keywords:} Periodic homogenization; nonlocal operators;
 oscillating coefficients;  singular kernels

\textbf{Mathematics Subject classification (2020):} 35B27, 47G10, 45P05, 45M05

\section{Introduction}
 Nonlocal  equations arise naturally in various applications  such as anomalous diffusion\cite{MR2722295}, image processing\cite{MR2480109}, mathematical finance\cite{MR1867081}, continuous particle systems\cite{MR2505861}, and population dynamics \cite{MR2120547, MR3327993}, and have attracted considerable attention in recent years.   Mathematically, many nonlocal equations studied in the literature are associated with operators of the form
\begin{equation}\label{eq,equation of general form}
  Lu(x)=\intrd K(x,y)\paren{u(y)-u(x)}\rmd y,
\end{equation}
where the kernel $K(x,y)$ satisfies suitable structural conditions. However, in many applications, the kernels $K$ exhibit fine-scale heterogeneities, reflecting complex microscopic structures in the underlying medium \cite{MR2870298,MR4132119,MR3554548}. 
Consequently, the associated operators contain coefficients that vary rapidly at the microscopic $\varepsilon$-scale, which makes the direct analysis of solutions to the corresponding nonlocal equations challenging. 
This difficulty motivates the study of their homogenization, namely, whether the large-scale behavior of solutions to nonlocal equations with rapidly oscillating coefficients can be effectively described by a simpler, "homogenized" equation as $\varepsilon \to 0$.

This nonlocal homogenization problem has been extensively studied in various settings. For instance, when the jumping kernel  is integrabel,
Piatnitski and Zhizhina \cite{MR3595876} considered the homogenization of the following diffusive scaling nonlocal operators with  convolution type kernels $p$ and rapidly oscillating coefficients $\blv$ in periodic media:
\begin{equation}\label{eq,operator 2017  Piatnitski and Zhizhina}
  \lvar_1 u(x)=\frac{1}{\varepsilon^{d+2}}\intrd p\big(\frac{x-y}{\varepsilon}\big)\blv(x,y)\paren{u(y)-u(x)}\rmd y,
\end{equation} 
where the kernel $p(z)\in L^1(\rd)\cap\ardloc$  is a nonnegative and even function with finite second moments. Using the corrector method, they proved that the family $\lvar_1$ converges to a local second-order elliptic operator $L^0_1$ with constant coefficient in the so-called $G$-topology or in the resolvent sense, that is for any $m>0$ the family of operators $(-\lvar_1+m)^{-1}$ converges strongly in $\ard$ to the operator $(-L^0_1+m)^{-1}$. In the follow-up work \cite{MR4053030},  they extended the initial approach for periodic media to a random framework and proved that the above result holds almost surely. 

On the other hand, when the kernel $K$ is singular, e.g., 
taking $K(z)$ the fractional Laplacian kernel $\abs{z}^{-d-\alpha}$ with $0<\alpha<2$, corresponding to the generator of a symmetric stable L\'evy process, Kassmann,  Piatnitski, and Zhizhina \cite{MR4000213} studied the homogenization of the operator 
\begin{equation}\label{eq,levy operator with oscillatory coefficient}
  \lvar_2 u(x)=\intrd \frac{u(y)-u(x)}{\abs{x-y}^{d+\alpha}}\blv(x,y)\rmd y.
\end{equation}
Here the periodic or random rapidly oscillating coefficients $\blv$ are assumed to have either product structure $\lambda\paren{x/\varepsilon}\mu\paren{y/\varepsilon}$ or symmetric structure
$\Lambda\paren{x,y,x/\varepsilon,y/\varepsilon}$ and to be uniformly positive and bounded.
Unlike \cite{MR3595876,MR4053030} or the homogenization of local operators\cite{MR4072212,MR4049392,MR4199276},  the limit operator in this setting is nonlocal and of L\'evy type. For degenerate and unbounded random coefficients,  \cite{MR4261110} established  homogenization results  and explicitly identified the corresponding limiting effective Dirichlet forms. Moreover, in \cite{Jin2026} we extended the results in 
 \cite{MR4000213} to the following nonlocal operator
  \begin{equation}\label{eq,main operator in jin2026+}
\lvar_3 u(x) = \int_{\mathbb{R}^d} K(x,y)\blv(x,y)(u(y)-u(x)) \rmd y 
\end{equation}
  with a broader class of singular kernel $K(x,y)$, allowing $K$ to be a singular measure or not of convolution type,  and in particular allowing stable-like kernels, i.e. $K(x-y)\asymp\abs{x-y}^{-d-\alpha}$ with $\alpha\in(0,2)$. We point out that  in \cite{Jin2026} if  $K$ is convolution type, then it is assumed to satisfy an averaged annular upper bound: 
 \begin{equation}\label{eq,upper condition}
   r^\alpha\int_{B_{2r}\setminus B_r}K(z)\rmd z\leq C
 \end{equation} 
 for some positive constant $C$ depending only on $d$ and $\alpha$ and for any $r>0$.
 Such a condition is common in the literature on the regularity of nonlocal integro-differential equations; see, for instance, \cite{MR4769823,MR4793281,MR4049224,MR3518535}.

Recently,  Piatnitski and Zhizhina \cite{Piatnitski2025} considered the homogenization of nonlocal equations associated with  convolution type operators 
\begin{equation}\label{eq:main operator}
  \lvar u(x)=\frac{1}{\varepsilon^{d+\alpha}}\intrd p\big(\frac{x-y}{\varepsilon}\big)\blv(x,y)\paren{u(y)-u(x)}\rmd y\quad \text{for}\quad x\in\rd,\quad\alpha\in(0,2),
\end{equation}  
where the kernel function $p(z)\in L^1(\rd)$ is non-negative and symmetric and the corresponding measure $p(z)\rmd z$ belongs to the domain of attraction of a stable law; see assumption \eqref{H3} for the definition and \cite[Chapter 8]{MR2780345} for the probabilistic background.  In order to obtain homogenization results,  they also assumed that  
$p(z)$ is stable-like for $\abs{z}\geq M$ and  that its local oscillation  decays at infinity faster than itself; see Remark \ref{remark,condition is weaker} for details.  Consequently, $p(z)$ has an infinite second moment,
which differs from the kernel condition  in \cite{MR3595876,MR4053030} and leads to a nonlocal effective operator.  Meanwhile, the stable-like condition in areas away from the origin also implies 
\begin{equation*}
  \frac{1}{\varepsilon^{d+\alpha}}p\big(\frac{x-y}{\varepsilon}\big)\asymp\abs{x-y}^{-d-\alpha}
\end{equation*}
for points $(x,y)$ away from the diagonal $x=y$ and for sufficiently small $\varepsilon>0$. This is closely related to \eqref{eq,upper condition}, as discussed above. Motivated by  our previous work \cite{Jin2026}, in this paper, we  extend the results of \cite{Piatnitski2025} to a less restrictive setting. More precisely,
we  assume that $p(z)$ satisfies  an averaged upper bound  of the form \eqref{eq,upper condition} only for $\abs{z}\geq M$; see assumption \eqref{H2}. Simultaneously, we also weaken the pointwise negligible oscillation assumption to one that holds only in an integral sense; see assumption \eqref{H4}. We then establish the homogenization result in the resolvent sense for the associated nonlocal operators with periodic and locally periodic coefficients $\blv(x, y)$ under these weaker conditions. 

The weaker assumptions on the kernel $p$ considered here also introduce additional analytical challenges, since the pointwise upper bound  
\begin{equation*}
  \frac{1}{\varepsilon^{d+\alpha}}p\big(\frac{x-y}{\varepsilon}\big)\lesssim\abs{x-y}^{-d-\alpha}
\end{equation*} 
are no longer available.  Moreover, in general one cannot expect  $\frac{1}{\varepsilon^{d+\alpha}}p\big(\frac{x-y}{\varepsilon}\big)$ to be uniformly bounded in $L^2$ with respect to $\varepsilon$ on the set $G_1^\delta$ defined in Lemma \ref{lemma,convergence on rd times rd}, in contrast to Lemma 3.3 in \cite{Piatnitski2025} where this property is used.  Although this uniform  boundedness in $L^2$ are not available, the annular integral upper bound condition \eqref{eq,h2 right}  yields uniform boundedness in $L^1_{\mathrm{loc}}(\rd\setminus\{0\})$ for the rescaled kernel $\frac{1}{\varepsilon^{d+\alpha}}p\big(\frac{z}{\varepsilon}\big)$. Inspired by the work \cite{MR4958113} on stochastic homogenization of elliptic equations with $L^1$ random convolution potentials, this local $L^1$-bound combined with the convolution structure of $p$, enables us to apply an  $\varepsilon$-cubes decomposition of  $G_1^\delta$  as in \cite{Piatnitski2025} to obtain the required convergence on $G_1^\delta$. We also mention that  when applying the $\varepsilon$-cubes decomposition, the contribution from cubes intersecting the boundary of $G_1^\delta$ must be shown to be negligible as $\varezero$. More precisely, the integral of $\frac{1}{\varepsilon^{d+\alpha}}p\big(\frac{x-y}{\varepsilon}\big)$ over such boundary-associated regions vanishes in the limit, as established in equality \eqref{eq,errer of G1 and Ik} in the proof of Lemma \ref{lemma,convergence on rd times rd}. In contrast, this issue does not appear in \cite{Piatnitski2025}  since the stronger assumptions $p(z)\lesssim\abs{z}^{-d-\alpha}$ for $\abs{z}\geq M$ there circumvent this boundary effect.

 The rest of the paper is organized as follows. In Section \ref{ass and res}, we introduce the precise assumptions on the kernel $p$ and the coefficient $\blv$ and state the main homogenization results. Section \ref{section,existence and uniqueness} is devoted to the existence and uniqueness of solutions to  the original equation and the effective  equation. In Section \ref{section,proof}, we prove the main theorems. The section is divided into several subsections: first we establish the compactness results of $\uep$ in $\ardloc$; then we identify the effective equation; finally we establish the homogenization in $\ard$ and complete the proof for the periodic and locally periodic settings.

\section{Assumptions and main results}\label{ass and res}
\subsection[Assumptions on the kernel p]{Assumptions on the kernel $p$}
Throughout the paper, we suppose that the  kernel $p(z)$  satisfies the following conditions:
\begin{enumerate}[label=(\textbf{H}\arabic{enumi}),ref=\textbf{H}\arabic{enumi}]
\item \label{H1}\textbf{(Non-negativity, symmetry, and integrability)}
\begin{equation}\label{eq,h1}
  p(z)\geq0,\quad p(z)=p(-z)\; \text{for all}\; z\in\rd, \quad \intrd p(z)\rmd z=1.
\end{equation}
  \item \label{H2} \textbf{(Two-sided  bounds)} There exist positive constants $C_1$, $C_2$, and $M\geq 1$ such that  for almost every $z\in\rd$ with $\abs{z}\geq M$,   
  \begin{equation}\label{eq,h2 left}
 p(z)\geq C_1\abs{z}^{-d-\alpha},
\end{equation}
and for all $r\geq M$,
\begin{equation}\label{eq,h2 right}
   r^\alpha\int_{B_{2r}\setminus B_r}p(z)\rmd z\leq C_2.
\end{equation}
  \item \label{H3} \textbf{(Attraction of a symmetric $\alpha$-stable law)} There exist a continuous symmetric positive function $k$ : $\sd\to\mathbb{R}_+$, such that for all open subset $D\subset \sd$ with boundary Lebesgue measure $\abs{\partial D}=0$,
  \begin{equation}\label{eq,h3}
    \int_{\{z\in\rd:\abs{z}>n,\tilde{z}\in D\}}p(z)\rmd z\sim\frac{1}{\alpha n^\alpha}\int_{D} k(s)\rmd S, \quad n\to\infty,
  \end{equation}
  where $\tilde{z}=z/\abs{z}$ and the symbol "$\sim$" means that the ratio of the two sides of this formula tends to one as $n\to\infty$.
  \item \label{H4}\textbf{(Negligible local $L^1$-oscillation relative to the local average of $p$ at infinity)}    Let $Q:=[-1/2,1/2]^d$ be the unit cube in $\rd$ and let $Q_z:=\{x+z:x\in Q\}$. Then,  
  \begin{equation}\label{eq,h4}
 \phi(r):=\sup_{\substack{{\abs{z'-z}\leq\sqrt{d}}\\\abs{z}\geq r}}\frac{\fint_{Q_z}\abs{p(x)-\fint_{Q_{z'}}p(y)\rmd y}\rmd x}{\fint_{Q_z}p(x)\rmd x}\to 0 \quad \text{as}\quad r\to\infty.
  \end{equation}
\end{enumerate}
Hereinafter, we use $\fint_{U}h$ to denote the $L^1$ average of a function $h$ 
 over a measurable set $U\subset\rd$, $i.e.$ 
\begin{equation*}
  \fint_{U}h=\frac{1}{|U|}\int_{U}h
\end{equation*}
 and use $B_{R}$ to denote a ball in $\rd$ centered at the origin with a radius $R$.
\begin{remark}\label{remark,fractional Laplacian kernel satisfy}
    It is worth  pointing out that the fractional Laplacian kernel $\abs{z}^{-d-\alpha}$, corresponding to the generator of a symmetric stable L\'evy process, clearly satisfies the condition \eqref{eq,h2 right}. Further examples satisfying  the condition \eqref{eq,h2 right} can be found in \cite{MR4793281,MR3518535}
    \end{remark}
    \begin{remark}\label{remark,condition is weaker}
      We compare assumptions \eqref{H2} and \eqref{H4} with the corresponding assumptions in Piatnitski and Zhizhina \cite{Piatnitski2025}, where the kernel $p$ is assumed to satisfy that for almost all $z$ such that $\abs{z}\geq M$,
  \begin{equation}\label{eq,h2 origin}
    \beta_1\abs{z}^{-d-\alpha}\leq p(z)\leq \beta_2\abs{z}^{-d-\alpha}
  \end{equation}
  and there exists a constant $K>0$ such that
  \begin{equation}\label{eq,h4 origin}
    \Phi_K(r):=\esssup_{\substack{\abs{z'-z}\leq K\\\abs{z}\geq r}}\frac{\abs{p(z')-p(z)}}{p(z)}\to 0\quad\text{as}\quad r\to\infty.
  \end{equation}
  Observe that the relation \eqref{eq,h4 origin} holds independently of the choice of $K$. Moreover, for sufficiently large $r>0$ depending only on $d$, it is easily seen that
    \begin{equation*}
      \phi(r)\leq\sup_{\substack{{\abs{z'-z}\leq\sqrt{d}}\\\abs{z}\geq r}}\frac{\fint_{Q_z}\fint_{Q_{z'}}\abs{p(x)-p(y)}\rmd y\rmd x}{\fint_{Q_z}p(x)\rmd x}\leq \Phi_{2\sqrt{d}}(r/2).
    \end{equation*}
    Therefore, in view of this fact and Remark \ref{remark,fractional Laplacian kernel satisfy}, our assumptions (cf. \eqref{H2} and \eqref{H4}) are weaker than those (cf. \eqref{eq,h2 origin} and \eqref{eq,h4 origin} above) in  \cite{Piatnitski2025}. 
    \end{remark}

\subsection{Periodic coefficients: structural assumptions and homogenization}
 In terms of the rapidly oscillating coefficient $\blv(x,y)$, let us first consider the periodic setting. In this case, we assume that $\blv(x,y)$ takes a form of $\Lambda(x/\varepsilon, y/\varepsilon)$, where $\Lambda(\xi, \eta)$ satisfies the following conditions:
 \begin{equation}\label{eq:Lamda condition periodic}
   \Lambda(\xi, \eta)=\Lambda(\eta, \xi),\quad \Lambda(\xi+z, \eta+z)=\Lambda(\xi, \eta),\quad
   \gamma^{-1}\leq\Lambda(\xi, \eta)\leq\gamma
 \end{equation}
 for all $\xi$, $\eta\in\rd$, $z\in\zd$, and some constant $\gamma\geq 1$.
 
 In order to illustrate our main theorems, we first define the limit operator
   \begin{equation}\label{eq:form of limit operator}
     L^0u(x)=\intrd \Lambda^{\text{eff}}(x,y)\frac{u(y)-u(x)}{\abs{x-y}^{d+\alpha}}\rmd y,
   \end{equation}
   where  
   \begin{equation}\label{eq:effective coefficient}
     \Lambda^{\textup{eff}}(x,y)=\bar{\Lambda}k\big(\frac{x-y}{\abs{x-y}}\big),\quad
     \bar{\Lambda}=\iint_{[0,1]^{2d}}\Lambda(x,y)\rmd x\rmd y.
   \end{equation}
   Note that $\Lambda^{\text{eff}}(x,y)$ is a uniformly positive and bounded function on $\{(x,y)\in\rd\times\rd:x\neq y\}$ since $0<c\leq k(s)\leq C$ on $\sd$.
   
   The main homogenization result in periodic setting is the following theorem.
   \begin{theorem}\label{thm:periodic theorem}
     Assume that \eqref{H1}--\eqref{H4} and \eqref{eq:Lamda condition periodic}  hold. For a given constant $m>0$ and each $f\in\ard$, let $\uep\in\ard$ be the solution of equation
 $-\lvar\uep+m\uep=f$ and let $u_0\in\hnorm$ be the solution of equation
 $-L^0u_0+mu_0=f$ with $L^0$ defined in \eqref{eq:form of limit operator}--\eqref{eq:effective coefficient}. Then, as $\varezero$, $\uep$ converges to $u_0$ strongly in $\ard$.
   \end{theorem}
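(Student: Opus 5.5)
The proof will follow the compactness scheme of \cite{MR4000213,Piatnitski2025}. First, testing $-\lvar\uep+m\uep=f$ against $\uep$ and using the symmetry of $\blv$ together with \eqref{eq:Lamda condition periodic} gives the uniform energy bound
\begin{equation*}
m\norm{\uep}^2_{\ard}+\frac{1}{2\gamma\,\varepsilon^{d+\alpha}}\iintrdd p\big(\tfrac{x-y}{\varepsilon}\big)\paren{\uep(x)-\uep(y)}^2\rmd x\rmd y\leq \frac{1}{m}\norm{f}^2_{\ard}.
\end{equation*}
The lower bound \eqref{eq,h2 left} says that $\varepsilon^{-d-\alpha}p(z/\varepsilon)\geq C_1\abs{z}^{-d-\alpha}$ for $\abs{z}\geq M\varepsilon$. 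Hence the energy controls the truncated $H^{\alpha/2}$-seminorm with jumps longer than $M\varepsilon$. From this I would derive a Fourier-side estimate of the form $\int \min(\abs{\xi}^\alpha,\varepsilon^{-\alpha})\abs{\widehat{\uep}}^2\rmd\xi\leq C$, which gives precompactness of $\{\uep\}$ in $\ardloc$ by a Kolmogorov--Riesz argument. Passing to a subsequence, $\uep\rightharpoonup u$ weakly in $\ard$ and strongly in $\ardloc$. The same estimate shows that any limit lies in $\hnorm$. Tightness, needed to upgrade to strong $\ard$ convergence, will come afterwards from a cutoff test function $\eta_R(x)=\eta(x/R)$. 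There the commutator terms are bounded using \eqref{eq,h1} near the origin (scaled mass $\le 1$) and \eqref{eq,h2 right} on dyadic annuli away from it.

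Next I identify the limit. For $\varphi\in\ccinfty$, the weak form reads $\frac12\iint \varepsilon^{-d-\alpha}p(\frac{x-y}{\varepsilon})\Lambda(\frac x\varepsilon,\frac y\varepsilon)(\uep(y)-\uep(x))(\varphi(y)-\varphi(x))\rmd x\rmd y+m\int\uep\varphi=\int f\varphi$. I split $\rd\times\rd$ into three regions:
\begin{itemize}
\item the near-diagonal set $\abs{x-y}<\delta$, bounded by $C\delta^{1-\alpha/2}$-type quantities via Cauchy--Schwarz, the energy bound, and $\abs{\varphi(x)-\varphi(y)}\le C\abs{x-y}$ with \eqref{eq,h2 right} summed over dyadic annuli;
\item the far region $\abs{x-y}>1/\delta$, small by the same annular bound;
\item the middle region $G_1^\delta$.
\end{itemize}
On $G_1^\delta$ (restricted to a bounded set thanks to the support of $\varphi$ and tightness), I need
\begin{equation*}
\varepsilon^{-d-\alpha}p\big(\tfrac{x-y}{\varepsilon}\big)\Lambda\big(\tfrac x\varepsilon,\tfrac y\varepsilon\big)\rightharpoonup \bar\Lambda\, k\big(\tfrac{x-y}{\abs{x-y}}\big)\abs{x-y}^{-d-\alpha}
\end{equation*}
tested against the product $(\uep(y)-\uep(x))(\varphi(y)-\varphi(x))$. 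Since $\uep\to u$ only strongly in $L^2_{\rm loc}$, I can form products of $L^2$-convergent functions in $(x,y)$ (the product $(\uep(y)-\uep(x))\Phi(x,y)$ converges strongly in $L^2(G_1^\delta)$). The required convergence is therefore weak convergence of the kernel against $L^2$-strongly convergent integrands. Because no uniform $L^2$ bound for the kernel is available, I instead use its uniform $L^1(G_1^\delta)$ bound, which follows from \eqref{eq,h2 right}.

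This step is the main obstacle. The plan is the $\varepsilon$-cube decomposition: tile $\rd\times\rd$ by cubes $\varepsilon(Q_j\times Q_k)$, $j,k\in\zd$. On each cube, $p(\frac{x-y}{\varepsilon})$ is replaced by its average over $Q_{j-k}$. By \eqref{H4}, the $L^1$ error is $\phi(\delta/\varepsilon)$ times the kernel mass, which tends to $0$ since $\abs{j-k}\gtrsim\delta/\varepsilon$. Within each cube, the periodicity of $\Lambda$ then lets me average $\Lambda$ to $\bar\Lambda$ in the weak sense. The test functions are first replaced by piecewise constants on $\varepsilon$-cubes, with the error controlled by the $L^1$ kernel bound and uniform continuity after approximating $u$ by smooth functions. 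Finally, \eqref{H3} identifies the limit of the averaged rescaled kernel, summed over cubes in a cone-shaped annular sector, as $k(s)\abs{z}^{-d-\alpha}\rmd z$. The cubes straddling $\partial G_1^\delta$ have total scaled kernel mass tending to zero, because their union lies in an $O(\varepsilon)$-neighbourhood of $\{\abs{x-y}=\delta\}\cup\{\abs{x-y}=1/\delta\}$. I would handle this with \eqref{eq,h2 right} applied to thin annuli, after first changing $\delta$ slightly to a value where \eqref{H3} gives no mass concentration; the lower bound is irrelevant here.

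Letting $\varepsilon\to0$ and then $\delta\to0$ shows that $u$ solves $-L^0u+mu=f$ weakly. By the uniqueness of solutions in $\hnorm$ (proved in Section \ref{section,existence and uniqueness}), $u=u_0$, and so the whole family converges. Strong $\ard$ convergence then follows from local strong convergence combined with the tightness estimate above. Alternatively, one can check that $\norm{\uep}_{\ard}\to\norm{u_0}_{\ard}$ by comparing the energy identity with that of $u_0$ via lower semicontinuity.
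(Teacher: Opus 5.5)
Your proposal is correct and follows essentially the same route as the paper: Kolmogorov--Riesz compactness from the energy truncated at $\abs{x-y}\ge M\varepsilon$ via \eqref{eq,h2 left} (your Fourier-side translation estimate is equivalent), the near-diagonal/far/middle splitting controlled by Cauchy--Schwarz and \eqref{eq,h2 right}, the $\varepsilon$-cube decomposition on $G_1^\delta$ with \eqref{H4} and periodicity of $\Lambda$, \eqref{H3} for the kernel limit, and a cutoff test function for tightness. Two steps need stating more carefully. First, when you replace $u^\varepsilon$ by $u$ (and $u$ by smooth approximants) on $G_1^\delta$, an $L^1(G_1^\delta)$ kernel bound does not pair with integrands that converge only in $L^2$; what makes this work, as in the paper, is the convolution structure, i.e.\ Young's inequality with $\sup_x\int_{\abs{x-y}\ge\delta}\varepsilon^{-d-\alpha}p(\frac{x-y}{\varepsilon})\rmd y\lesssim\delta^{-\alpha}$. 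Second, the thin boundary annuli are negligible because of \eqref{H3} alone, since the limit measure has no mass on spheres and so no perturbation of $\delta$ is needed; \eqref{eq,h2 right} cannot give this, because it only bounds dyadic annuli.
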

   \begin{remark}\label{remark,existence and uniqueness}
     The existence and uniqueness of the solution $\uep$ in $\ard$ is an immediate consequence of the fact that the operator $\lvar$ is  negative and self-adjoint in $\ard$. Alternatively, the existence and uniqueness of the limit (weak) solution $u_0$ in $\hnorm$ can be established via the direct method in the calculus of variations. See Section \ref{section,existence and uniqueness} for details.
   \end{remark}
   \subsection{Locally periodic coefficients: structural assumptions and homogenization}
  Next we consider the case where the coefficient $\blv$ is of locally periodic symmetric structure. Precisely, we assume
\begin{equation}\label{eq, lambda x y xi eta}
	\Lambda^{\varepsilon}(x,y)=\Lambda\left(x,y,x/\varepsilon,y/\varepsilon\right) 
\end{equation} 
with a function $\Lambda(x,y,\xi,\eta)$ that is  measurable in $(\xi,\eta)$ for all $(x,y)$, and satisfies periodicity:
\begin{equation}\label{eq,symmetric of locally periodic}
	\Lambda(x,y,\xi+z,\eta)=\Lambda(x,y,\xi,\eta), \;\;  \Lambda(x,y,\xi,\eta+z)=\Lambda(x,y,\xi,\eta)
\end{equation}
for all $x, y,\xi, \eta\in\rd$ and $z\in\zd$. Moreover, $\Lambda(x,y,\xi,\eta)$ satisfies the following conditions:
\begin{align}\label{eq,locally periodic}
	\begin{cases}
		\Lambda(x,y,\xi,\eta)=\Lambda(x,y,\eta,\xi),\\
		\gamma^{-1}\leq \Lambda(x,y,\xi,\eta)\leq \gamma
	\end{cases}
\end{align}
for some constant $\gamma\geq1$ and all $x,y,\xi,\eta\in \mathbb{R}^{d}$. We also need the function $\Lambda(x,y,\xi,\eta)$ to be equicontinuous in $(x,y)$ uniformly with respect to $(\xi,\eta)$; that is, there exists a modulus of continuity $\omega$, independent of $(\xi,\eta)$, such that
\begin{equation}\label{eq, modulus of continuity}
  \abs{\Lambda(x_1,y_1,\xi,\eta)-\Lambda(x_2,y_2,\xi,\eta)}\leq
  \omega\paren{\abs{x_1-x_2}+\abs{y_1-y_2}}
\end{equation}
for  all $x_1,x_2,y_1,y_2,\xi,\eta\in \mathbb{R}^{d}$.

 The corresponding homogenization result in locally periodic settings is the following:
 \begin{theorem}\label{thm:locally periodic theorem}
   Under the assumptions of Theorem \ref{thm:periodic theorem} with \eqref{eq:Lamda condition periodic} replaced by \eqref{eq, lambda x y xi eta}--\eqref{eq, modulus of continuity}, the conclusion of Theorem \ref{thm:periodic theorem}  still holds  with \begin{equation}\label{eq:effective coefficient for locally periodic}
     \Lambda^{\textup{eff}}(x,y)=\bar{\Lambda}(x,y)k\big(\frac{x-y}{\abs{x-y}}\big),\quad
     \bar{\Lambda}(x,y)=\iint_{[0,1]^{2d}}\Lambda(x,y,\xi,\eta)\rmd \xi\rmd \eta.
   \end{equation}
 \end{theorem}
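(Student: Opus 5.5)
The proof of Theorem \ref{thm:locally periodic theorem} will reuse the scheme of the periodic case: a priori bounds, compactness, identification of the limit, and upgrade to strong $\ard$ convergence. The only new ingredient is control of the slow variables $(x,y)$ in $\Lambda$ through the modulus $\omega$ in \eqref{eq, modulus of continuity}.

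\textbf{A priori bounds and compactness.} Testing $-\lvar\uep+m\uep=f$ with $\uep$ gives
\begin{equation*}
m\norm{\uep}^2_{\ard}+\frac{1}{2\varepsilon^{d+\alpha}}\iintrdd p\big(\tfrac{x-y}{\varepsilon}\big)\blv(x,y)\paren{\uep(y)-\uep(x)}^2\rmd x\rmd y\leq \frac{1}{m}\norm{f}^2_{\ard}.
\end{equation*}
Since $\gamma^{-1}\leq\Lambda\leq\gamma$, this Dirichlet energy is comparable to the one with $\blv\equiv1$. The compactness statement in $\ardloc$ and the tightness estimate proved for the periodic case used only \eqref{H1}, \eqref{H2} and the bounds on $\blv$. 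They therefore apply verbatim. Along a subsequence we get $\uep\to u$ in $\ardloc$ and weakly in $\ard$, with $u\in\hnorm$ by lower semicontinuity and the lower bound \eqref{eq,h2 left}.

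\textbf{Identification of the limit.} Fix $\varphi\in\ccinfty$ and write the weak form
\begin{equation*}
\frac{1}{2\varepsilon^{d+\alpha}}\iintrdd p\big(\tfrac{x-y}{\varepsilon}\big)\Lambda\big(x,y,\tfrac{x}{\varepsilon},\tfrac{y}{\varepsilon}\big)\paren{\uep(y)-\uep(x)}\paren{\varphi(y)-\varphi(x)}\rmd x\rmd y+m(\uep,\varphi)=(f,\varphi).
\end{equation*}
The domain is split as in Lemma \ref{lemma,convergence on rd times rd}.

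1. \emph{Near the diagonal,} $|x-y|<\delta$: the Cauchy--Schwarz inequality with the energy bound and $|\varphi(y)-\varphi(x)|\leq C|x-y|$ gives a contribution $O(\delta^{1-\alpha/2})$ uniformly in $\varepsilon$.
2. \emph{Far region,} $|x-y|>1/\delta$: the contribution is small by \eqref{eq,h2 right} together with the tightness of $\uep$.
3. \emph{Intermediate region} $G_1^\delta$: this is where the work lies.

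On $G_1^\delta$, I partition $\rd\times\rd$ into products of $\varepsilon$-cubes and then into macroscopic cubes of side $h$, with $\varepsilon\ll h\ll\delta$. On each macroscopic cube $\Pi_j\times\Pi_k$ with centers $(x_j,y_k)$, I replace $\Lambda(x,y,\xi,\eta)$ by $\Lambda(x_j,y_k,\xi,\eta)$. This costs at most $\omega(2\sqrt d h)$ times
\begin{equation*}
\iint_{G_1^\delta}\varepsilon^{-d-\alpha}p\big(\tfrac{x-y}{\varepsilon}\big)\abs{\uep(y)-\uep(x)}\abs{\varphi(y)-\varphi(x)}\rmd x\rmd y,
\end{equation*}
and this integral is bounded uniformly in $\varepsilon$ by Cauchy--Schwarz and the energy estimate. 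With frozen slow variables, the coefficient is purely periodic in $(\xi,\eta)$. The periodic argument of Lemma \ref{lemma,convergence on rd times rd} then applies on each block: it uses the $\varepsilon$-cube decomposition, \eqref{H4} to replace $p$ by its cube averages, \eqref{H3} to produce the kernel $k(\tilde z)|z|^{-d-\alpha}$, and the boundary-cube estimate \eqref{eq,errer of G1 and Ik}. It yields the weak limit
\begin{equation*}
\varepsilon^{-d-\alpha}p\big(\tfrac{x-y}{\varepsilon}\big)\Lambda\big(x_j,y_k,\tfrac{x}{\varepsilon},\tfrac{y}{\varepsilon}\big)\rightharpoonup\bar\Lambda(x_j,y_k)k(\widetilde{x-y})|x-y|^{-d-\alpha}
\end{equation*}
in the integrated sense against the strongly $L^2_{\mathrm{loc}}$-convergent products. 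Since $h$ is fixed as $\varepsilon\to0$, there are only finitely many blocks on the bounded part of $G_1^\delta$ (intersected with the support region of $\varphi$). Summing over the blocks and then letting $h\to0$, the continuity of $\bar\Lambda$ (inherited from $\omega$) restores $\bar\Lambda(x,y)$. Finally letting $\delta\to0$ gives $-L^0u+mu=f$ weakly, with $\Lambda^{\textup{eff}}$ as in \eqref{eq:effective coefficient for locally periodic}. Uniqueness from Section \ref{section,existence and uniqueness} applies, because $\Lambda^{\textup{eff}}$ is symmetric, bounded and bounded below. Hence $u=u_0$ and the whole family converges.

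\textbf{Strong convergence.} Strong convergence in $\ard$ follows from the local strong convergence together with tightness. Alternatively, one can compare energies by testing with $\uep$ and $u_0$, exactly as in the periodic case.

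The main obstacle is the freezing step. The uniform-in-$\varepsilon$ $L^1$-type bound that makes the $\omega(h)$ error small must come from the energy estimate, not from a pointwise kernel bound. I also need the boundary-cube error of \eqref{eq,errer of G1 and Ik} to remain negligible when it is summed over the finitely many $h$-blocks. I would handle the latter by applying that estimate block by block, using that the number of blocks depends only on $h$ and $\delta$.
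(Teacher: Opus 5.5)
Your proposal is correct. Apart from one step it is the paper's argument: the energy bound, $\ardloc$-compactness, the splitting into $G_1^\delta,G_2^\delta,G_3^\delta$, the $\varepsilon$-cube decomposition with \eqref{H4}, and the tightness argument with the cutoff $\psi_n$ all carry over unchanged.

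\textbf{Where you differ: the scale at which the slow variables are frozen.}

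\emph{The paper's route.} It freezes at the microscopic scale. First $\uep$ is replaced by $u$ and then by a smooth function, using only the $L^1$ bound of the rescaled kernel on $\{|z|\geq\delta\}$. Then, on each cube $\ikv$, $\Lambda(x,y,x/\varepsilon,y/\varepsilon)$ is replaced by its value with $(x,y)$ set to the centre of that cube. The cost is at most $\omega(C\varepsilon)\norm{u}_{\linftyrd}\norm{\varphi}_{\linftyrd}\sum_{k\in\ndv}\varepsilon^{-d-\alpha}\iint_{\ikv}p\big(\frac{x-y}{\varepsilon}\big)\rmd y\rmd x\lesssim\omega(C\varepsilon)\delta^{-d-\alpha}$, which vanishes as $\varepsilon\to0$. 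Separate $\zd$-periodicity in $\xi$ and $\eta$ then produces the cube average $\bar\Lambda(x_k,y_k)$ in \eqref{eq,fen jie p}. Continuity of $\bar\Lambda$ lets the sum be reassembled into an integral. No new parameter and no new boundaries appear.

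\emph{Your route.} You freeze on mesoscopic blocks of side $h$ while $\uep$ is still present, and pay $\omega(Ch)$ times a Cauchy--Schwarz/energy bound. This handles the slow variables before, and independently of, the reduction to smooth functions. It also lets you quote the periodic argument block by block.

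\emph{The price of your route.} You need a third limit $h\to0$, and you create new interior faces $\partial\Pi_j\times\Pi_k$ and $\Pi_j\times\partial\Pi_k$. These faces are harmless, but not via the \eqref{H3} argument the paper uses for $D_1$ in \eqref{eq,errer of G1 and Ik}. For $\partial\Pi_j\times\Pi_k$, the relevant $x$ lie in a layer of measure $O(\varepsilon h^{d-1})$; the other face is symmetric in $y$. After the substitution $z=x-y$, the kernel mass on $\{|z|\geq\delta/2\}$ is $\lesssim\delta^{-\alpha}$ by \eqref{eq,h2 right}. So each of the finitely many faces contributes $O(\varepsilon)$, which is the mechanism of the paper's $D_2$ estimate. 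You can also avoid the faces entirely by assigning each $\varepsilon$-cube to the block containing its centre.

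In short, the paper's route is shorter and yours is more modular.

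A minor point: your far region should be $G_3^\delta=\{|x|+|y|\geq\delta^{-1}\}$, as in the paper. It is controlled by Cauchy--Schwarz, \eqref{eq,h2 right} and the compact support of $\varphi$; tightness of $\uep$ is not needed there.
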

 \section{Existence and uniqueness}\label{section,existence and uniqueness}
 In this section, we focus on the existence and uniqueness of the solutions $\uep$ and $u_0$ to the equations stated in Theorem \ref{thm:periodic theorem}. As explained in Remark \ref{remark,existence and uniqueness}, we employ different methods to prove these results. 
  These two methods are adopted from \cite{MR3595876} and \cite{Jin2026}, respectively.
  
  In what follows, we use $C$ to denote various positive constants, possibly different from line to line, whose dependence on other parameters will be clear from the context unless explicitly stated. We  also write $a\lesssim b$ instead of $a\leq Cb$ for uniform constant $C$ which does not need to be represented exactly and may change from line to line. We also use the notation $a\asymp b$, which means that $a$ and $b$ are comparable functions
in the sense that there exists a constant $C\geq1$ independent of $a$ and $b$ such that
$a/C\leq b\leq Ca$ in a given domain.
 \begin{lemma}\label{lemma,existence and uniqueness of uep}
   Let $\lvar$ be defined in \eqref{eq:main operator}.  Assume that \eqref{H1} and either \eqref{eq:Lamda condition periodic}  (without periodicity) or \eqref{eq,locally periodic} hold. Let $m>0$ and $f\in\ard$. Then, there exists a unique solution $\uep$ in $\ard$ of 
   \begin{equation}\label{eq,equation of uep}
     -\lvar\uep+m\uep=f.
   \end{equation}
   Moreover, we have the estimate
   \begin{equation}\label{eq,estimate of uep}
     \norm{\uep}_{\ard}\leq\frac{1}{m}\norm{f}_{\ard}.
   \end{equation}
 \end{lemma}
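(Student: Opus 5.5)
We plan to show that, for each fixed $\varepsilon>0$, $\lvar$ is a bounded, self-adjoint, nonpositive operator on $\ard$. Existence, uniqueness and \eqref{eq,estimate of uep} then follow from elementary spectral theory. Throughout, write
\begin{equation*}
p_\varepsilon(z)=\varepsilon^{-d-\alpha}p(z/\varepsilon),
\end{equation*}
so that by \eqref{H1} we have $\int_{\rd}p_\varepsilon(z)\rmd z=\varepsilon^{-\alpha}$. In both the periodic and the locally periodic setting, $\blv$ is measurable, symmetric in $(x,y)$ and satisfies $\gamma^{-1}\le\blv\le\gamma$. These are the only properties we use.

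\textbf{Boundedness.} Split $\lvar u=A^\varepsilon u-a^\varepsilon u$, where
\begin{equation*}
A^\varepsilon u(x)=\int_{\rd}p_\varepsilon(x-y)\blv(x,y)u(y)\rmd y,\qquad a^\varepsilon(x)=\int_{\rd}p_\varepsilon(x-y)\blv(x,y)\rmd y .
\end{equation*}
The multiplier satisfies $0\le a^\varepsilon\le\gamma\varepsilon^{-\alpha}$, so multiplication by $a^\varepsilon$ is bounded on $\ard$. For the integral part, $|A^\varepsilon u|\le\gamma\,p_\varepsilon*|u|$, and Young's inequality gives $\norm{A^\varepsilon u}_{\ard}\le\gamma\varepsilon^{-\alpha}\norm{u}_{\ard}$. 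Hence $\lvar$ is bounded with $\norm{\lvar}\le2\gamma\varepsilon^{-\alpha}$.

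\textbf{Symmetry and sign.} The evenness of $p$ and the symmetry of $\blv$ make the kernel $p_\varepsilon(x-y)\blv(x,y)$ symmetric, so $\lvar$ is self-adjoint. For $u,v\in\ard$, Fubini applies because all integrals are absolutely convergent: $p_\varepsilon\in L^1$, $\blv$ is bounded, and $u,v\in\ard$. Symmetrizing in $(x,y)$ gives
\begin{equation*}
(-\lvar u,v)_{\ard}=\frac12\iint_{\rd\times\rd}p_\varepsilon(x-y)\blv(x,y)\big(u(y)-u(x)\big)\big(v(y)-v(x)\big)\rmd x\rmd y .
\end{equation*}
Taking $v=u$ shows $(-\lvar u,u)\ge0$.

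\textbf{Conclusion.} The operator $T=-\lvar+m$ is bounded and self-adjoint, and it is coercive:
\begin{equation*}
(Tu,u)\ge m\norm{u}^2_{\ard}.
\end{equation*}
By Lax--Milgram (or because $\sigma(T)\subset[m,\infty)$), $T$ is invertible with $\norm{T^{-1}}\le1/m$. Therefore $\uep=T^{-1}f$ is the unique solution of \eqref{eq,equation of uep}. The estimate \eqref{eq,estimate of uep} also follows directly: testing the equation with $\uep$ gives $m\norm{\uep}^2\le(f,\uep)\le\norm{f}\norm{\uep}$.

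The only delicate point is justifying the symmetrization identity via Fubini. This is handled by the absolute integrability noted above, which holds uniformly in $x$ because $p\in L^1$. No step relies on \eqref{H2}--\eqref{H4}.
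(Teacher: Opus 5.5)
Your proposal is correct and follows essentially the paper's argument: you show $L^\varepsilon$ is bounded, self-adjoint and nonpositive via the symmetrized bilinear form, and then invert $m-L^\varepsilon$ with $\|(m-L^\varepsilon)^{-1}\|\le 1/m$. The differences are cosmetic. You bound $L^\varepsilon$ by splitting off the multiplier $a^\varepsilon$ and applying Young's inequality, whereas the paper expands $\|L^\varepsilon u\|^2$ directly. You also invoke coercivity/Lax--Milgram rather than the spectral theorem, which the paper itself notes as an alternative in a later remark.
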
  
 \begin{proof}
   We first show that the operator $\lvar$ is  nonpositive and self-adjoint in $\ard$. Let $u$, $v\in\ard$. By the symmetry of $p$ and $\blv$, it is easily seen that
   \begin{equation}\label{eq, symmetry of lvar}
     (\lvar u,v)=(u,\lvar v)=-\frac{1}{2\varepsilon^{d+\alpha}}\iintrdd p\big(\frac{x-y}{\varepsilon}\big)\blv(x,y)\paren{u(y)-u(x)}\paren{v(y)-v(x)}\rmd y\rmd x,
   \end{equation}
    where we denote by $(\cdot,\cdot)$ the inner product in $\ard$. In particular,
   \begin{equation}\label{eq,l u u nonpositive}
     (\lvar u,u)=-\frac{1}{2\varepsilon^{d+\alpha}}\iintrdd p\big(\frac{x-y}{\varepsilon}\big)\blv(x,y)\paren{u(y)-u(x)}^2\rmd y\rmd x\leq0.
   \end{equation}

   We also observe that $\lvar$ is bounded in $\ard$. Indeed, letting $u\in\ard$, 
   in view of \eqref{H1} and \eqref{eq:Lamda condition periodic} (or \eqref{eq,locally periodic}),  we have
   \begin{equation*}
     \begin{aligned}
     \norm{\lvar u}_{\ard}^2 &=\frac{1}{\varepsilon^{2d+2\alpha}}\intrd\intrd p\big(\frac{x-y}{\varepsilon}\big)\blv(x,y)\paren{u(y)-u(x)}\rmd y\\
     &\quad\times\intrd p\big(\frac{x-z}{\varepsilon}\big)\blv(x,z)\paren{u(z)-u(x)}\rmd z\rmd x  \\
      &\lesssim \frac{1}{\varepsilon^{2d+2\alpha}}\intrd\intrd p\paren{\frac{x-y}{\varepsilon}}\paren{\abs{u(y)}+\abs{u(x)}}\rmd y\intrd p\paren{\frac{x-z}{\varepsilon}}\paren{\abs{u(z)}+\abs{u(x)}}\rmd z\rmd x\\
     &\lesssim\frac{1}{\varepsilon^{2d+2\alpha}}\intrd p\paren{\frac{y}{\varepsilon}}\intrd p\paren{\frac{z}{\varepsilon}}
     \intrd\paren{\abs{u(x+y)}+\abs{u(x)}}\paren{\abs{u(x+z)}+\abs{u(x)}}\rmd x\rmd z\rmd y\\
     &\lesssim\frac{1}{\varepsilon^{2\alpha}}\norm{u}_{\ard}^2.
   \end{aligned}
   \end{equation*}
   Thus, by the spectral theorem, we have
   \begin{equation*}
     \norm{\paren{m-\lvar}^{-1}}_{\mathcal{L}\paren{\ard,\ard}}\leq\frac{1}{m}.
   \end{equation*}
   Therefore, equation \eqref{eq,equation of uep} has a unique solution $\uep=\paren{m-\lvar}^{-1}f\in\ard$ and $\uep$ satisfies the estimate \eqref{eq,estimate of uep}.
 \end{proof}
 
 For the purpose of homogenization, we need the limit function of the family 
$\{\uep\}_{\varepsilon>0}$(as $\varezero$) to be unique. This is ensured by the existence and uniqueness of the solution $u_0$ to the limiting equation $-L^0u_0+mu_0=f$. We first introduce the definition of solution to this equation.  
\begin{definition}
  Let the operator $L^0$ be defined in \eqref{eq:form of limit operator}. Given $m>0$ and $f\in\ard$, we say that $u\in\hnorm$ is a weak solution of the equation
  \begin{equation}\label{eq,equation of u0}
    -L^0u_0+mu_0=f,
  \end{equation}
  if
  \begin{equation}\label{eq,weak solution def}
    -\frac{1}{2}\iintrdd\Lambda^{\textup{eff}}(x,y)\frac{\paren{u_0(y)-u_0(x)}\paren{v(y)-v(x)}}
    {\abs{x-y}^{d+\alpha}}\rmd y\rmd x+m\intrd u_0(x)v(x)\rmd x=\intrd f(x)v(x)\rmd x
  \end{equation}
  for all $v\in\hnorm$.
\end{definition}  
\begin{lemma}\label{lemma, existence and uniqueness of u0}
  There exists a unique weak solution $u_0$ to the equation \eqref{eq,equation of u0}.
\end{lemma}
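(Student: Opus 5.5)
The plan is to treat \eqref{eq,weak solution def} as the Euler--Lagrange equation of a strictly convex, coercive functional on $\hnorm$ and use the direct method of the calculus of variations, as indicated in Remark \ref{remark,existence and uniqueness}. (Equivalently, one could apply the Lax--Milgram theorem to the bilinear form below.) Note first that the sign convention in \eqref{eq,weak solution def} has to be read with the usual sign: the identity to be established is
\begin{equation*}
\mathcal{E}(u_0,v)+m\intrd u_0v\rmd x=\intrd fv\rmd x,\quad
\mathcal{E}(u,v):=\frac12\iintrdd\Lambda^{\textup{eff}}(x,y)\frac{\paren{u(y)-u(x)}\paren{v(y)-v(x)}}{\abs{x-y}^{d+\alpha}}\rmd y\rmd x .
\end{equation*}
This is what one gets from \eqref{eq, symmetry of lvar} with $-L^0$ in place of $-\lvar$.

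\textbf{Step 1: properties of the bilinear form.} Use the bounds $0<c\le k\le C$ on $\sd$, together with $\gamma^{-1}\le\bar\Lambda\le\gamma$ (or the same bounds for $\bar\Lambda(x,y)$ in the locally periodic case, which follow from \eqref{eq,locally periodic}). These give $\Lambda^{\textup{eff}}\asymp1$, hence
\begin{equation*}
\mathcal{E}(u,u)\asymp[u]^2_{\hnorm}:=\iintrdd\frac{\abs{u(y)-u(x)}^2}{\abs{x-y}^{d+\alpha}}\rmd y\rmd x .
\end{equation*}
In addition, $\Lambda^{\textup{eff}}(x,y)=\Lambda^{\textup{eff}}(y,x)$: the evenness of $k$ (assumption \eqref{H3}) and the symmetry of $\bar\Lambda$ give this. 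Consequently $\mathcal{E}$ is a symmetric, bounded, nonnegative bilinear form on $\hnorm$, and $\mathcal{E}(u,u)+m\norm{u}_{\ard}^2\ge\min(c,m)\norm{u}^2_{\hnorm}$.

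\textbf{Step 2: existence by minimization.} Consider
\begin{equation*}
J(u)=\frac12\mathcal{E}(u,u)+\frac m2\norm{u}^2_{\ard}-\intrd fu\rmd x .
\end{equation*}
By Step 1 and the bound $\abs{\int fu}\le\norm{f}\norm{u}$, the functional $J$ is bounded below and coercive on $\hnorm$. Take a minimizing sequence; it is bounded in $\hnorm$, so a subsequence converges weakly in $\hnorm$. The functional $J$ is convex and strongly continuous, hence weakly lower semicontinuous, and so the weak limit $u_0$ is a minimizer. For any $v\in\hnorm$, the map $t\mapsto J(u_0+tv)$ is a quadratic polynomial minimized at $t=0$. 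Setting its derivative at $t=0$ to zero gives exactly the weak formulation.

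\textbf{Step 3: uniqueness.} Suppose $u_1,u_2$ are two weak solutions and set $w=u_1-u_2$. Testing with $v=w$ gives $\mathcal{E}(w,w)+m\norm{w}^2_{\ard}=0$, hence $w=0$.

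The only point needing care is the boundedness and ellipticity in Step 1, i.e.\ the two-sided bound on $\Lambda^{\textup{eff}}$. Once that bound holds, the rest is the standard Hilbert-space argument.
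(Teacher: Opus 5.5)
Your proposal is correct and takes essentially the same route as the paper: the direct method applied to the quadratic energy (your $J$ is half of the paper's $F$), with uniqueness obtained by testing the equation for the difference of two solutions against itself, which amounts to the paper's a priori estimate combined with linearity. Your reading of the weak formulation with $+\frac{1}{2}$ in front of the double integral, rather than the $-\frac{1}{2}$ printed in \eqref{eq,weak solution def}, is the correct one, and it is the form the paper itself uses when identifying the limit.
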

\begin{proof}
  The uniqueness is an immediate consequence of the linearity of equation \eqref{eq,equation of u0}  and the estimate
  \begin{equation}\label{eq,estimate of u0}
    \norm{u_0}_{\hnorm}\lesssim\norm{f}_{\ard}.
  \end{equation}
  To see this estimate, test equation \eqref{eq,equation of u0} against $u_0$ (i.e. take $v=u_0$ in \eqref{eq,weak solution def}) and observe that
   \begin{equation}\label{eq,equal norm of u0}
     \iintrdd\Lambda^{\textup{eff}}(x,y)\frac{\paren{u_0(y)-u_0(x)}^2}
    {\abs{x-y}^{d+\alpha}}\rmd y\rmd x\asymp[u_0]_{\hnorm}^2,
   \end{equation}
  which is due to the fact that $C^{-1}\leq \Lambda^{\textup{eff}}(x,y)\leq C$ for a.e. $(x,y)\in\rd\times\rd$ and for some uniform constant $C\geq 1$. 
  We thus obtain the following two estimates:
  \begin{equation*}
    [u_0]_{\hnorm}^2\lesssim (f,u_0)\quad\text{and}\quad[u_0]_{\ard}^2\lesssim (f,u_0),
  \end{equation*}
  which, together with the Cauchy-Schwarz inequality, give the desired result \eqref{eq,estimate of u0}.
  
  We next prove the existence of the solution $u_0$ via the direct method in the calculus of variations. To this end, let us define a functional on $\hnorm$ as follows:
  \begin{equation*}
    F(u):=(-L^0 u,u)+m(u,u)-2(f,u).
  \end{equation*}
  We claim that the functional $F$ is continuous and strictly convex on $\hnorm$. Indeed, similar to \eqref{eq, symmetry of lvar}, for all $u$, $v\in\hnorm$, we have
  \begin{equation*}
    (L^0 u,v)=(u,L^0 v)=-\frac{1}{2}\iintrdd\Lambda^{\textup{eff}}(x,y)\frac{\paren{u(y)-u(x)}\paren{v(y)-v(x)}}
    {\abs{x-y}^{d+\alpha}}\rmd y\rmd x.
  \end{equation*}
  Hence, by the Cauchy-Schwarz inequality, we obtain
  \begin{equation*}
      \abs{\paren{-L^0 u, v}}\lesssim[u]_{\hnorm}[v]_{\hnorm}<\infty,
     \end{equation*} 
  In addition, by the symmetry of $L^0$, a straightforward calculation shows that 
   \begin{align*}
    F(u+tv)-F(u)&= 2t\baren{(-L^0 u,v)+m(u,v)+(f,v)}+t^2\baren{(-L^0 v,v)+m(v,v)}
   \end{align*} 
   for $u$, $v\in\hnorm$ and $t\in\r$.
  From the above estimate,  it clearly implies that $F$ is continuous on $\hnorm$.
  Moreover, for $u$, $v\in\hnorm$ with $u\neq v$ and  $t\in(0,1)$, in view of \eqref{eq,equal norm of u0}, we have
   \begin{equation*}
   F(tu+(1-t)v)-tF(u)-(1-t)F(v)=t(t-1)\baren{(-L^0(u-v),u-v)+m(u-v,u-v)}<0,
   \end{equation*}
   which implies that $F$ is strictly convex on $\hnorm$.  
   
   We proceed to show that $F$ is coercive in the sense that if $\norm{u}_{\hnorm}\rightarrow+\infty$, then $F(u)\rightarrow+\infty$. 
     Combining \eqref{eq,equal norm of u0}  with Young's inequality, we  deduce from the definition of $F$ that
    \begin{equation}\label{eq,F is coercive}
    \begin{aligned}
 F(u)&\geq C_3[u]_{\hnorm}^2+C_4\norm{u}_{\ard}^2-C_5\norm{f}_{\ard}^2\\
      &\geq \min\{C_3/2, C_4/2\} \norm{u}_{\hnorm}^2-C_5\norm{f}_{\ard}^2,
      \end{aligned}
    \end{equation}
    where the constants $C_3$, $C_4$, $C_5$ are positive and independent of $u$.
    This implies the coercivity of $F$. 
    Therefore, by a standard argument (see \cite[Corollary 3.23]{MR2759829} or \cite[Chapter 2, Proposition 1.2]{MR1727362}) on the existence and uniqueness of minimizers for functionals on reflexive Banach spaces,  we conclude from 
     the continuity, the strict convexity, and the coercivity of $F$ that  the functional $F$ admits its minimum at a unique point $\tilde{u}\in\hnorm$.
       
     On the other hand, we need to prove that $\tilde{u}$ satisfies  equation \eqref{eq,equation of u0} and hence we obtain the existence of the solution to it. Indeed, since  $\tilde{u}$ is the unique minimizer of $F$, it satisfies that
   \begin{equation*}
     \frac{\rmd}{\rmd t}F(\tilde{u}+tv)\Big|_{t=0}=0\quad \text{for all}\quad v\in\hnorm.
   \end{equation*}
   By the definition of $F$, this is equivalent to the fact that $\tilde{u}$ satisfies \eqref{eq,weak solution def},
  i.e., $\tilde{u}$ is a solution of \eqref{eq,equation of u0}.
\end{proof}
\begin{remark}
  The existence and uniqueness of  solutions to equations \eqref{eq,equation of uep} and \eqref{eq,equation of u0} can alternatively be obtained from the Lax-Milgram theorem applied to the bilinear forms associated with the weak formulations of the nonlocal operators $m-\lvar$ and $m-L^0$. The continuity and coercivity of these bilinear forms can follow from the same assumptions ensuring the continuity, strict convexity, and coercivity of the corresponding energy functionals.
\end{remark}
\section{Proof of main results}\label{section,proof}
The goal of this section is to establish the convergence of $\uep$ to $u_0$ in $\ard$. Since the problem is set on the whole space $\rd$, the proof is divided into several steps. We first show that the family $\{\uep\}_{\varepsilon>0}$ is relatively compact in $\ardloc$. This allows us to identify the limit and obtain convergence in $\ardloc$. Finally, by establishing a suitable control at infinity, we upgrade the local convergence to convergence in $\ard$. Throughout Subsections \ref{subsection1}--\ref{subsection3}, we work under the assumptions of Theorem \ref{thm:periodic theorem}.
\subsection[Compactness in L2locRd]{Compactness in $\ardloc$}\label{subsection1}
In this subsection, we establish the relative compactness of the family $\{\uep\}_{\varepsilon>0}$  in $\ardloc$ (see Lemma \ref{lemma,l2loc compactness for uep} below). Throughout this subsection, we work under exactly the same assumptions as in \cite{Piatnitski2025}. Therefore, we only sketch the proof and refer to \cite[Lemmas 3.1 and 3.2]{Piatnitski2025} for the full details. The proof relies the following Kolmogorov–M. Riesz–Fr\'echet compactness theorem, which can be view as the $L^q$-version of the Ascoli–Arzel\`a
theorem.
\begin{lemma}\cite[Theorem 4.26]{MR2759829}\label{lemma,Kolmogorov compactness}
  Let $\mathcal{U}$ be a bounded set in $\qqrd)$ with $1\leq q<\infty$. Assume that 
  \begin{equation}\label{eq,Kolmogorov compactness}
    \lim_{\abs{h}\to0}\norm{u(x+h)-u(x)}_{\qqrd}=0\quad\text{uniformly in}\quad u\in\mathcal{U}.
  \end{equation}
  Then the closure of  $\mathcal{U}|_G$ in $L^q(G)$ is compact for any measurable set $G\subset\rd$ with finite measure. Here $\mathcal{U}|_G$ denotes the restrictions to $G$ of the functions in $\mathcal{U}$.
\end{lemma}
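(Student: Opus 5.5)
We prove the statement by approximating each $u\in\mathcal{U}$ by a mollification and applying the Arzel\`a--Ascoli theorem, then dealing with the unboundedness of $G$ through a tail estimate. Take a standard mollifier $\rho_n(x)=n^d\rho(nx)$, with $\rho\in C_c^\infty(B_1)$, $\rho\ge0$ and $\int\rho=1$. By Minkowski's integral inequality,
\begin{equation*}
\norm{\rho_n*u-u}_{\qqrd}\leq\sup_{\abs{h}\leq 1/n}\norm{u(\cdot+h)-u}_{\qqrd}.
\end{equation*}
By \eqref{eq,Kolmogorov compactness}, the right-hand side tends to $0$ as $n\to\infty$ uniformly in $u\in\mathcal{U}$. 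So for every $\eta>0$ there is an $n$ with $\norm{\rho_n*u-u}_{L^q(G)}<\eta$ for all $u\in\mathcal{U}$. Since $L^q(G)$ is complete, it therefore suffices to show that $\{\rho_n*u|_G:u\in\mathcal{U}\}$ is totally bounded in $L^q(G)$ for this fixed $n$.

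Fix $n$ and set $K:=\sup_{u\in\mathcal{U}}\norm{u}_{\qqrd}<\infty$. H\"older's inequality gives
\begin{equation*}
\norm{\rho_n*u}_{L^\infty(\rd)}\leq\norm{\rho_n}_{L^{q'}}K
\quad\text{and}\quad
\norm{\nabla(\rho_n*u)}_{L^\infty(\rd)}\leq\norm{\nabla\rho_n}_{L^{q'}}K.
\end{equation*}
Thus the family $\{\rho_n*u\}_{u\in\mathcal{U}}$ is uniformly bounded and equi-Lipschitz on $\rd$.

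Now handle the possible unboundedness of $G$. This is the key point, and it works because $\abs{G}<\infty$ combines with the uniform $L^\infty$ bound. For $R>0$ we have
\begin{equation*}
\norm{\rho_n*u}_{L^q(G\setminus B_R)}^q\leq\big(\norm{\rho_n}_{L^{q'}}K\big)^q\abs{G\setminus B_R}.
\end{equation*}
Since $\abs{G}<\infty$, dominated convergence gives $\abs{G\setminus B_R}\to0$ as $R\to\infty$. We may therefore choose $R$ so that this tail is below $\eta^q$ uniformly in $u\in\mathcal{U}$.

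On the compact set $\overline{B_R}$, the Arzel\`a--Ascoli theorem shows that $\{\rho_n*u|_{\overline{B_R}}\}$ is relatively compact in $C(\overline{B_R})$. Hence it is totally bounded in $L^q(G\cap B_R)$, because $\norm{w}_{L^q(G\cap B_R)}\leq\abs{G}^{1/q}\norm{w}_{L^\infty(B_R)}$. Cover this family by finitely many $L^q(G\cap B_R)$-balls of radius $\eta$.

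To conclude, for each $u\in\mathcal{U}$ split $\norm{u-v}_{L^q(G)}$, with $v$ one of the finitely many centers (extended by zero outside $B_R$). The error is at most the mollification error on $G$, plus the tail on $G\setminus B_R$, plus the covering error on $G\cap B_R$, so it is at most $3\eta$. Since $\eta>0$ was arbitrary, $\mathcal{U}|_G$ is totally bounded in $L^q(G)$. Its closure is therefore compact.

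The step needing care is the passage from bounded sets to sets of merely finite measure. The uniform $L^\infty$ bound on the mollified functions is exactly what makes the tail over $G\setminus B_R$ uniformly small.
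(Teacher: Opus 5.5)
Your proof is correct. It is essentially the standard argument for \cite[Theorem 4.26]{MR2759829}, which the paper cites rather than reproves: a mollification error made uniform by the translation hypothesis, H\"older bounds giving uniform $L^\infty$ and Lipschitz control of $\rho_n*u$, Arzel\`a--Ascoli on a ball, and a tail over $G\setminus B_R$ made small by $\abs{G\setminus B_R}\to0$ together with the uniform $L^\infty$ bound.
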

Combining the above compactness lemma with the estimate \eqref{eq,estimate of uep} for $\uep$ in Lemma \ref{lemma,existence and uniqueness of uep}, we can obtain the following compactness result.
\begin{lemma}\label{lemma,l2loc compactness for uep}
Assume that  \eqref{eq,h1}, \eqref{eq,h2 left}, and  \eqref{eq:Lamda condition periodic} 
 hold. Then, for any sequence $\varepsilon_i\to0$, the set  $\{\uepi\}_{i\in\mathbb{N}}$ is relatively compact in $\ardloc$. Moreover, any limit point of this family is an element of $\hnorm$.
\end{lemma}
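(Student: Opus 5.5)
We will derive the compactness from the energy estimate supplied by the equation, combined with the lower bound \eqref{eq,h2 left} on $p$ and Lemma \ref{lemma,Kolmogorov compactness}.

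\textbf{Energy estimate.} Testing \eqref{eq,equation of uep} with $\uep$ and using \eqref{eq,l u u nonpositive} together with \eqref{eq,estimate of uep} gives
\begin{equation*}
  \frac{1}{2\varepsilon^{d+\alpha}}\iintrdd p\big(\tfrac{x-y}{\varepsilon}\big)\blv(x,y)\paren{\uep(y)-\uep(x)}^2\rmd y\rmd x+m\norm{\uep}_{\ard}^2=(f,\uep)\leq\frac1m\norm{f}_{\ard}^2 .
\end{equation*}
Since $\blv\geq\gamma^{-1}$, the scaled energy $E_\varepsilon(\uep):=\varepsilon^{-d-\alpha}\iintrdd p((x-y)/\varepsilon)(\uep(y)-\uep(x))^2\rmd y\rmd x$ is therefore bounded uniformly in $\varepsilon$.

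\textbf{Using the lower bound.} By \eqref{eq,h2 left}, $\varepsilon^{-d-\alpha}p(z/\varepsilon)\geq C_1\abs{z}^{-d-\alpha}$ whenever $\abs{z}\geq M\varepsilon$. Hence
\begin{equation*}
  \iint_{\abs{x-y}\geq M\varepsilon}\frac{(\uep(y)-\uep(x))^2}{\abs{x-y}^{d+\alpha}}\rmd y\rmd x\leq C
\end{equation*}
uniformly in $\varepsilon$. So we control a truncated $H^{\alpha/2}$ seminorm, with the missing region $\abs{x-y}<M\varepsilon$ shrinking as $\varezero$.

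\textbf{Equicontinuity of translations.} This is the main obstacle. I will show
\begin{equation*}
  \norm{\uep(\cdot+h)-\uep}_{\ard}^2\lesssim \abs{h}^{\alpha}+\text{(a term vanishing as } \varepsilon\to0 \text{ or } \abs{h}\to0),
\end{equation*}
uniformly in $\varepsilon$.

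\emph{Case $\abs{h}\geq M\varepsilon$.} Write $\uep(x+h)-\uep(x)=[\uep(x+h)-\uep(y)]+[\uep(y)-\uep(x)]$ and average over $y$ in the ball $B(x+h/2,\abs{h}/4)$. On this ball both $\abs{y-x}$ and $\abs{y-x-h}$ are comparable to $\abs{h}$, and in particular $\geq M\varepsilon$ once $\abs{h}\geq 4M\varepsilon$. Jensen's inequality then gives $\norm{\uep(\cdot+h)-\uep}^2_{\ard}\lesssim\abs{h}^{\alpha}$ times the truncated seminorm, hence $\lesssim \abs{h}^\alpha$.

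\emph{Case $\abs{h}<4M\varepsilon$.} I use the equation itself, as in \cite[Lemma 3.2]{Piatnitski2025}. The idea is to compare $\uep$ with a mollified version $\uep*\rho_{\delta}$ at a scale $\delta\geq 4M\varepsilon$. The error $\uep-\uep*\rho_\delta$ is bounded by the same averaging argument as above, giving a bound $\lesssim\delta^{\alpha}$, while the translation estimate for $\uep*\rho_\delta$ is trivial. Alternatively, for a fixed $h_0$ one can reduce to the first case via $h=(h+h_0)-h_0$ with $\abs{h_0}\sim 4M\varepsilon$, which gives a bound $\lesssim\varepsilon^\alpha$.

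Either way, $\sup_\varepsilon\norm{\uep(\cdot+h)-\uep}_{\ard}\to0$ as $\abs{h}\to0$. Here only finitely many $\varepsilon_i$ exceed a given threshold, and for each of those the $\ard$-continuity of translations handles the rest. This gives \eqref{eq,Kolmogorov compactness}, and Lemma \ref{lemma,Kolmogorov compactness} yields relative compactness in $\ardloc$.

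\textbf{Regularity of limit points.} Suppose $u^{\varepsilon_i}\to u$ in $\ardloc$. Pass to a subsequence converging a.e., and apply Fatou's lemma on the sets $\{\abs{x-y}\geq\eta\}$ for fixed $\eta>0$; these are eventually contained in $\{\abs{x-y}\geq M\varepsilon_i\}$. Then let $\eta\to0$. This gives $[u]_{\hnorm}\leq C$. Together with $\norm{u}_{\ard}\leq\norm{f}_{\ard}/m$, obtained by weak lower semicontinuity, we conclude $u\in\hnorm$.
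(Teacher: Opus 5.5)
Your proposal is correct and follows essentially the same route as the paper's sketch, which defers to Piatnitski--Zhizhina: the uniform energy bound, the lower bound \eqref{eq,h2 left} giving a truncated $H^{\alpha/2}$ seminorm, the two-regime translation estimate \eqref{eq,uep difference estimate}, Kolmogorov--Riesz, and Fatou for the limit. Your second option for small $\abs{h}$ (writing $h=(h+h_0)-h_0$ with $h_0$ parallel to $h$ and $\abs{h_0}\sim M\varepsilon$) is exactly what yields the paper's $C\varepsilon^\alpha$ bound; the mollifier variant would need an annular kernel or $\delta\gg\varepsilon$, since the truncated seminorm gives no control on $\abs{x-y}<M\varepsilon$, and your explicit treatment of the finitely many large $\varepsilon_i$ fills in a step the paper's sketch leaves implicit.
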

\begin{proof}[Sketch of the proof]
  The boundedness of the set $\{\uepi\}_{i\in\mathbb{N}}$ in $\ard$ has been obtained in \eqref{eq,estimate of uep}, therefore, in view of Lemma \ref{lemma,Kolmogorov compactness}, it suffices to prove \eqref{eq,Kolmogorov compactness} to establish its relative compactness. To see this, observe first that $(-\lvar\uep,\uep)\geq0$ by \eqref{eq,l u u nonpositive} and $m(\uep,\uep)\geq0$. Hence, if we test equation 
  \eqref{eq,equation of uep} against $\uep$, it follows from the Cauchy-Schwarz inequality and the estimate \eqref{eq,estimate of uep} that
  \begin{equation}\label{eq,p uep estimate}
     (-\lvar u,u)\asymp\frac{1}{\varepsilon^{d+\alpha}}\iintrdd p\big(\frac{x-y}{\varepsilon}\big)\paren{\uep(y)-\uep(x)}^2\rmd y\rmd x\lesssim\norm{f}_{\ard}^2\leq C.
  \end{equation} 
  In view of the lower bound condition \eqref{eq,h2 left} for $p$, the above estimate yields 
  \begin{align*}
    &\quad\iint_{\{(x,y):\abs{x-y}>M\varepsilon\}}\frac{\paren{\uep(y)-\uep(x)}^2}{\abs{x-y}^{d+\alpha}}\rmd y\rmd x\\
    &\lesssim
    \frac{1}{\varepsilon^{d+\alpha}}\iint_{\{(x,y):\abs{x-y}>M\varepsilon\}} p\big(\frac{x-y}{\varepsilon}\big)\paren{\uep(y)-\uep(x)}^2\rmd y\rmd x\leq C.
  \end{align*} 
  This allows us to show the following estimate 
  \begin{equation}\label{eq,uep difference estimate}
    \intrd\paren{\uep(x+h)-\uep(x)}^2\rmd x\leq 
    \begin{cases}
       C\abs{h}^\alpha, & \mbox{if } \abs{h}\geq3M\varepsilon \\
       C\varepsilon^\alpha, & \mbox{if }\abs{h}\leq3M\varepsilon,
    \end{cases}
   \end{equation}
   where the uniform constant $C>0$ is independent of $h$ and $\varepsilon$.
   With the help of estimate \eqref{eq,uep difference estimate}, we can consider separately the cases $\varepsilon_i\leq \abs{h}/(3M)$ and $\varepsilon_i\geq \abs{h}/(3M)$, leading to
   \begin{equation*}
     \forall \kappa>0\; \exists\delta>0\quad\text{such that}\quad\sup\limits_{\abs{h}\leq \delta,\varepsilon_i\geq 0}\intrd\paren{\uepi(x+h)-\uepi(x)}^2\rmd x\leq C\kappa.
   \end{equation*}
   This is equivalent to \eqref{eq,Kolmogorov compactness} upon taking $\mathcal{U}=\{\uepi\}_{i\in\mathbb{N}}$ and $q=2$.
   
   To see the latter statement of Lemma \ref{lemma,l2loc compactness for uep},  observe that we have shown
   \begin{equation*}
     \iintrdd\mathbbm{1}_{\abs{x-y}>M\varepsilon}(x-y)\frac{\paren{\uep(y)-\uep(x)}^2}
     {\abs{x-y}^{d+\alpha}}\rmd y\rmd x\leq C.
   \end{equation*}
   Let us consider an arbitrary limit point $u$ of the sequence $\uepi$ as $\varepsilon_i\to0$.
   Then by a diagonal argument, for a subsequence,  $\uepi$ converges to $u$ a.e. in $\rd$. 
   Therefore, for a.e. $(x,y)\in\rd\times\rd$, as $\varepsilon_i\to0$,
   \begin{equation*}
     \mathbbm{1}_{\abs{x-y}>M\varepsilon}(x-y)\frac{\paren{\uep(y)-\uep(x)}^2}
     {\abs{x-y}^{d+\alpha}}
     \to\frac{\paren{u(y)-u(x)}^2}
     {\abs{x-y}^{d+\alpha}}.
   \end{equation*}
   By the Fatou lemma,  we derive
   \begin{equation*}
    \iintrdd\frac{\paren{u(y)-u(x)}^2}
     {\abs{x-y}^{d+\alpha}}\rmd y\rmd x\leq C,
   \end{equation*}
   which, together with the fact that $\norm{u}_{\ard}\leq C$, yields $u\in\hnorm$.
\end{proof}
\subsection{Identification of the limit}
In this subsection, we identify the limit function $u\in\hnorm$ by proving that $u$ is a weak solution (in the sense of \eqref{eq,weak solution def} to the limit equation \eqref{eq,equation of u0}. Thanks to the uniqueness result established in Lemma \ref{lemma, existence and uniqueness of u0}, this identification implies that the whole family $\uep$ converges to $u$ in $\ardloc$. Due to the density of $\ccinfty$ in $\hnorm$, we need to show that for any $\varphi\in\ccinfty$, $u$ satisfies the following equality: 
\begin{equation*}
  \frac{1}{2}\iintrdd\Lambda^{\textup{eff}}(x,y)\frac{\paren{u(y)-u(x)}\paren{\varphi(y)-\varphi(x)}}
    {\abs{x-y}^{d+\alpha}}\rmd y\rmd x+m\intrd u(x)\varphi(x)\rmd x=\intrd f(x)\varphi(x)\rmd x.
\end{equation*}
With this goal in mind, testing equation \ref{eq,equation of uep} against $\varphi$, we obtain
\begin{align*}
  &\frac{1}{2\varepsilon^{d+\alpha}}\iintrdd p\big(\frac{x-y}{\varepsilon}\big)\blv(x,y)\paren{\uep(y)-\uep(x)}\paren{\varphi(y)-\varphi(x)}\rmd y\rmd x\\
  &\quad+ m\intrd \uep(x)\varphi(x)\rmd x=\intrd f(x)\varphi(x)\rmd x.
\end{align*}
Observe that by Lemma \ref{lemma,l2loc compactness for uep}, for a subsequence, we have
\begin{equation*}
  \intrd \uep(x)\varphi(x)\rmd x\to\intrd u(x)\varphi(x)\rmd x, \quad\text{as}\quad\varezero.
\end{equation*}
Therefore, it suffices to prove the following lemma:
\begin{lemma}\label{lemma,convergence on rd times rd}
  Assume that \eqref{H1}--\eqref{H4} and \eqref{eq:Lamda condition periodic}--\eqref{eq:effective coefficient} hold. Then, 
  \begin{equation}\label{eq,convergence of main term}
  \begin{aligned}
    &\lim_{\varezero}\frac{1}{\varepsilon^{d+\alpha}}\iintrdd p\big(\frac{x-y}{\varepsilon}\big)\blv(x,y)\paren{\uep(y)-\uep(x)}\paren{\varphi(y)-\varphi(x)}\rmd y\rmd x\\&=\iintrdd\Lambda^{\textup{eff}}(x,y)\frac{\paren{u(y)-u(x)}\paren{\varphi(y)-\varphi(x)}}
    {\abs{x-y}^{d+\alpha}}\rmd y\rmd x.
    \end{aligned}
  \end{equation}
\end{lemma}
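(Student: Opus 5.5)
We split $\rd\times\rd$ into three regions. Fix $\delta>0$ small and $R$ large, with $\supp\varphi\subset B_R$. Set
\[
G_1^\delta=\{\delta\le\abs{x-y}\le 1/\delta,\ \abs{x}+\abs{y}\le R'\},
\]
where $R'$ is large. Let $G_2^\delta=\{\abs{x-y}<\delta\}$, and let $G_3^\delta$ be the remaining set. On $G_3^\delta$ at least one of $x,y$ lies outside $B_R$ with $\abs{x-y}>1/\delta$, or both lie outside $B_R$, in which case the integrand vanishes. The plan is to show three things: the contributions of $G_2^\delta$ and $G_3^\delta$ are small uniformly in $\varepsilon$ as $\delta\to0$; the same holds for the limit integrand; and on $G_1^\delta$ the convergence holds for each fixed $\delta$. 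Letting $\varepsilon\to0$ and then $\delta\to0$ then gives \eqref{eq,convergence of main term}.

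\textbf{Near diagonal.} We apply Cauchy--Schwarz. The factor in $\uep$ is controlled by the energy bound \eqref{eq,p uep estimate}. The factor in $\varphi$ is controlled by
\[
\varepsilon^{-d-\alpha}\int_{\abs{z}<\delta}p(z/\varepsilon)\,\abs{z}^2\rmd z\;\lesssim\; \varepsilon^{2-\alpha}\int_{\abs{w}<M}p\;+\;\sum_{M\varepsilon\le 2^j\le\delta}2^{j(2-\alpha)}\;\lesssim\;\delta^{2-\alpha},
\]
using $\abs{\varphi(y)-\varphi(x)}\le C\abs{x-y}$ and the annular bound \eqref{eq,h2 right} applied dyadically.

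\textbf{Far region.} By \eqref{eq,h2 right}, the rescaled kernel mass over $\abs{z}>1/\delta$ is $\lesssim\delta^\alpha$. Together with $\norm{\uep}_{\ard}\le C$ and $\varphi\in\ard$, this gives a contribution $O(\delta^{\alpha/2})$. The limit kernel is handled the same way, using $u\in\hnorm$.

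\textbf{The fixed region $G_1^\delta$.} This is the main obstacle. By the compactness from Lemma~\ref{lemma,l2loc compactness for uep}, $\uep\to u$ in $L^2(B_{R'})$, so $(\uep(y)-\uep(x))(\varphi(y)-\varphi(x))\to(u(y)-u(x))(\varphi(y)-\varphi(x))$ in $L^1(G_1^\delta)$. This product is not bounded, though, and since the kernel $K_\varepsilon(x,y)=\varepsilon^{-d-\alpha}p((x-y)/\varepsilon)\Lambda(x/\varepsilon,y/\varepsilon)$ is only bounded in $L^1$, not in $L^\infty$ or $L^2$, a plain weak-star argument does not apply. The plan is to tile $\rd\times\rd$ by cubes $\varepsilon Q_{i}\times\varepsilon Q_{j}$ with $i,j\in\zd$.

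First, we replace $u$ by a smooth approximation $u_\eta\in C_c^\infty$. The error $\iint_{G_1^\delta}K_\varepsilon\,\abs{(\uep-u_\eta)(y)-(\uep-u_\eta)(x)}\,\abs{\varphi(y)-\varphi(x)}$ must be controlled. For this we use the convolution structure: the translation-averaged kernel $\int K_\varepsilon(x,x+z)\rmd z$ over $\delta\le\abs{z}\le1/\delta$ is bounded by $C(\delta)$ via \eqref{eq,h2 right}. This yields an $L^2$-type bound $\lesssim C(\delta)\norm{\uep-u_\eta}_{L^2(B_{R'})}$, obtained by Cauchy--Schwarz and Fubini in the variables $x$ and $z=y-x$; the point is that $\int p_\varepsilon(z)\abs{w(x+z)}^2\rmd x\rmd z=\norm{w}^2\int p_\varepsilon$. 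With $\uep$ replaced by the smooth function $u_\eta$, we then show that $K_\varepsilon\mathbbm 1_{G_1^\delta}$ converges weakly against continuous functions to $\Lambda^{\rm eff}\abs{x-y}^{-d-\alpha}\mathbbm1_{G_1^\delta}$.

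On each $\varepsilon$-cube pair the test function is nearly constant. Writing $x=\varepsilon(i+\xi)$ and $y=\varepsilon(j+\zeta)$, we replace $p((x-y)/\varepsilon)$ by the cube average $\fint_{Q_{i-j}}p$. Assumption \eqref{H4} makes this replacement a relative error $\phi(\delta/\varepsilon)\to0$, since $\abs{i-j}\ge\delta/\varepsilon\to\infty$. After the replacement, $\Lambda(\xi,\zeta)$ averages to $\bar\Lambda$ by periodicity. Then \eqref{H3}, applied to sectors and shells, identifies the limit of $\sum_{i-j}\varepsilon^{-\alpha}\fint_{Q_{i-j}}p\,\mathbbm1$ as the measure $k(\tilde z)\abs{z}^{-d-\alpha}\rmd z$; note that \eqref{eq,h3} is a statement about tail masses, which determines the measure on annular sectors with null boundary. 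The boundary cubes, meaning those meeting $\partial G_1^\delta$, occupy an $O(\varepsilon)$-neighbourhood of $\partial G_1^\delta$. Their kernel mass tends to zero by \eqref{eq,h2 right} applied to thin shells $\{\delta-C\varepsilon\le\abs{z}\le\delta+C\varepsilon\}$, combined with \eqref{H3} and continuity of the limit measure, which shows that the thin shells carry vanishing mass. This boundary estimate, together with the $L^1$-only control of the kernel, is the delicate step.
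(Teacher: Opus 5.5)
Your proposal is correct and follows essentially the same route as the paper. You split into three regions and use Cauchy--Schwarz with dyadic applications of \eqref{eq,h2 right} off $G_1^\delta$. A convolution-type $L^1$ kernel bound lets you replace $\uep$ by $u$ and then by a smooth function. On $G_1^\delta$ you use an $\varepsilon$-cube decomposition in which \eqref{H4} freezes $p$ on cube pairs, periodicity produces $\bar{\Lambda}$, \eqref{H3} identifies the limit kernel, and boundary cubes are controlled by thin-shell estimates. The differences are cosmetic. Your extra cutoff $\abs{x-y}\le 1/\delta$ and large $R'$ make the outer boundary $\abs{x}+\abs{y}=R'$ harmless, because $\varphi(y)-\varphi(x)$ vanishes near it, whereas the paper estimates that boundary layer directly. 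You also keep the full product rather than splitting it into the terms with $u(x)\varphi(x)$ and $u(x)\varphi(y)$.
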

\begin{proof}
  The desired convergence follows by decomposing the integration domain $\rd\times\rd$ into three regions and passing to the limit on each of them. More precisely, for $\delta>0$, we define
  \begin{align*}
    G_1^\delta &:=\baren{(x,y)\in\rd\times\rd:\abs{x-y}\geq\delta,\abs{x}+\abs{y}\leq\delta^{-1}}, \\
     G_2^\delta & :=\baren{(x,y)\in\rd\times\rd:\abs{x-y}\leq\delta,\abs{x}+\abs{y}\leq\delta^{-1}},  \\
     G_3^\delta &:=\baren{(x,y)\in\rd\times\rd:\abs{x}+\abs{y}\geq\delta^{-1}}. 
  \end{align*}
  To simplify notation, we introduce the shorthand
  \begin{equation*}
    \bigqep(x,y):=\frac{1}{\varepsilon^{d+\alpha}} p\big(\frac{x-y}{\varepsilon}\big)\blv(x,y)\paren{\uep(y)-\uep(x)}\paren{\varphi(y)-\varphi(x)}
  \end{equation*}
  and
  \begin{equation*}
    Q(x,y):=\Lambda^{\textup{eff}}(x,y)\frac{\paren{u(y)-u(x)}\paren{\varphi(y)-\varphi(x)}}
    {\abs{x-y}^{d+\alpha}}.
  \end{equation*}
  Let us first prove 
  \begin{equation}\label{eq,limit on G2,G3}
    \lim_{\delta\to0}\lim_{\varezero}\iint_{G_2^\delta\cup G_3^\delta}\bigqep(x,y)\rmd y\rmd x=0.
  \end{equation}
  To see this, observe that by \eqref{eq:Lamda condition periodic}, the Cauchy-Schwarz inequality, and \eqref{eq,p uep estimate},  we have
  \begin{equation}\label{eq, estimate of integral on G2 and G3}
  \begin{aligned}
    \abs{\iint_{G_2^\delta\cup G_3^\delta}\bigqep(x,y)\rmd y\rmd x}
    &\lesssim\paren{\frac{1}{\varepsilon^{d+\alpha}}\iint_{G_2^\delta\cup G_3^\delta} p\big(\frac{x-y}{\varepsilon}\big)\paren{\uep(y)-\uep(x)}^2\rmd y\rmd x}^{1/2}\\
    &\quad \times\paren{\frac{1}{\varepsilon^{d+\alpha}}\iint_{G_2^\delta\cup G_3^\delta} p\big(\frac{x-y}{\varepsilon}\big)\paren{\varphi(y)-\varphi(x)}^2\rmd y\rmd x}^{1/2}\\
    &\lesssim \paren{\frac{1}{\varepsilon^{d+\alpha}}\iint_{G_2^\delta\cup G_3^\delta} p\big(\frac{x-y}{\varepsilon}\big)\paren{\varphi(y)-\varphi(x)}^2\rmd y\rmd x}^{1/2}.
    \end{aligned}
  \end{equation}
  We first estimate the last integral on $G_2^\delta$. Applying the fact that $\abs{\varphi(y)-\varphi(x)}\leq\norm{\nabla\varphi}_{\linftyrd}\abs{y-x}$ for $\varphi\in\ccinfty$ yields
  \begin{align*}
  &\quad\frac{1}{\varepsilon^{d+\alpha}}\iint_{G_2^\delta} p\big(\frac{x-y}{\varepsilon}\big)\paren{\varphi(y)-\varphi(x)}^2\rmd y\rmd x\\
  &=\frac{1}{\varepsilon^{d+\alpha}}\iint_{G_2^\delta\cap\{(x,y):\abs{x-y}\leq M\varepsilon\}} p\big(\frac{x-y}{\varepsilon}\big)\paren{\varphi(y)-\varphi(x)}^2\rmd y\rmd x \\
    &\quad +\frac{1}{\varepsilon^{d+\alpha}}\iint_{G_2^\delta\cap\{(x,y):\abs{x-y}\geq M\varepsilon\}} p\big(\frac{x-y}{\varepsilon}\big)\paren{\varphi(y)-\varphi(x)}^2\rmd y\rmd x  \\
    & \lesssim \frac{\norm{\nabla\varphi}_{\linftyrd}^2\abs{\supp \varphi}}{\varepsilon^{d+\alpha}}\paren{\varepsilon^2\int_{\{z:\abs{z}\leq M\varepsilon\}}p\paren{\frac{z}{\varepsilon}}\rmd z+\int_{\{z:M\varepsilon\leq\abs{z}\leq \delta\}}p\paren{\frac{z}{\varepsilon}}\abs{z}^2\rmd z}\\
    & \lesssim \norm{\nabla\varphi}_{\linftyrd}^2\abs{\supp \varphi}\varepsilon^{2-\alpha}\paren{\int_{\{z:\abs{z}\leq M\}}p(z)\rmd z+\int_{\{z:M\leq\abs{z}\leq \delta/\varepsilon\}}p(z)\abs{z}^2\rmd z}.
     \end{align*}
    By assumption \eqref{H1}, we have
    \begin{equation*}
      \int_{\{z:\abs{z}\leq M\}}p(z)\rmd z\leq 1.
    \end{equation*} 
   Since $\delta/\varepsilon\gg1$, we can choose $j\in\mathbb{N}$  such that $2^jM\leq \delta/\varepsilon< 2^{j+1}M$.
    From assumption \eqref{eq,h2 right}, it follows that
    \begin{equation}\label{eq, estimate pz z2}
    \begin{aligned}
     \int_{\{z:M\leq\abs{z}\leq \delta/\varepsilon\}}p(z)\abs{z}^2\rmd z
      & \leq \sum_{i=0}^{j}\int_{\{z:2^iM\leq\abs{z}\leq 2^{i+1}M\}}p(z)(2^{i+1}M)^2\rmd z \\
      & \lesssim \sum_{i=0}^{j}2^{(2-\alpha)i}\lesssim (\frac{\delta}{\varepsilon})^{2-\alpha}.
      \end{aligned} 
      \end{equation}
      Putting together the above estimates, we obtain
      \begin{equation}\label{eq,estimate on G2}
        \frac{1}{\varepsilon^{d+\alpha}}\iint_{G_2^\delta} p\big(\frac{x-y}{\varepsilon}\big)\paren{\varphi(y)-\varphi(x)}^2\rmd y\rmd x\leq C(\varepsilon^{2-\alpha}+\delta^{2-\alpha}).
      \end{equation}
      We proceed to show that the last integral on $G_3^\delta$ in \eqref{eq, estimate of integral on G2 and G3} is negligible as $\varezero$ and $\delta\to0$. Let
       \begin{equation*}
        O:= \baren{(x,y)\in\rd\times\rd:x\in \supp \varphi\;\text{or}\; y\in\supp \varphi}.
       \end{equation*}
       Observe that for sufficiently small $\delta>0$ (depending on $\supp \varphi$), the following relation holds:
      \begin{equation*}
        G_3^\delta\cap O\subset\baren{(x,y):\abs{x-y}>1/(2\delta)}\cap O.
      \end{equation*}
      Therefore, using a dyadic decomposition and  assumption \eqref{eq,h2 right} again leads to
      \begin{align*}
        &\frac{1}{\varepsilon^{d+\alpha}}\iint_{G_3^\delta} p\big(\frac{x-y}{\varepsilon}\big)\paren{\varphi(y)-\varphi(x)}^2\rmd y\rmd x
        \lesssim \frac{\norm{\varphi}_{\linftyrd}^2\abs{\supp \varphi}}{\varepsilon^{d+\alpha}}\int_{\{z:\abs{z}\geq 1/(2\delta)\}}p\paren{\frac{z}{\varepsilon}}\rmd z\\
        &\lesssim \varepsilon^{-\alpha}\int_{\{z:\abs{z}\geq 1/(2\delta\varepsilon)\}}p(z)\rmd z\leq\varepsilon^{-\alpha}\sum_{i=-1}^{+\infty}\int_{\{z:2^{i}/(\delta\varepsilon)\leq\abs{z}\leq
    2^{i+1}/(\delta\varepsilon)\}}p(z)\rmd z\lesssim \varepsilon^{-\alpha}(\delta\varepsilon)^\alpha=\delta^\alpha.
      \end{align*}
      Inserting this estimate and \eqref{eq,estimate on G2} into \eqref{eq, estimate of integral on G2 and G3} gives our desired result \eqref{eq,limit on G2,G3}.
      
      On the other hand, since $\Lambda^{\textup{eff}}(x,y)\leq C$ for a.e. $(x,y)\in\rd\times\rd$, by the Cauchy-Schwarz inequality, 
      \begin{equation*}
        \abs{\iint_{G_2^\delta\cup G_3^\delta}Q(x,y)\rmd y\rmd x}\lesssim [u]_{H^{\alpha/2}(G_{2}^{\delta}\cup G_{3}^{\delta})}[\varphi]_{H^{\alpha/2}(G_{2}^{\delta}\cup G_{3}^{\delta})}.
      \end{equation*}
     Observe that
      \begin{equation*}
    \mathbbm{1}_{G_2^\delta\cup G_3^\delta}(x,y)\to 0\hsp\text{as}\; \delta\to0
  \end{equation*}
   for a.e. $(x,y)\in\rd\times\rd$. The Lebesgue dominated convergence theorem  implies
   \begin{equation}\label{eq, limit of Q on G2 and G3}
     \lim_{\delta\to0}\iint_{G_2^\delta\cup G_3^\delta}Q(x,y)\rmd y\rmd x=0.
  \end{equation}
   Having established \eqref{eq,limit on G2,G3} and \eqref{eq, limit of Q on G2 and G3}, in order to conclude the lemma, it suffices to prove that 
   \begin{equation}\label{eq,convergence on G1}
     \lim_{\varezero}\iint_{G_1^\delta}\bigqep(x,y)\rmd y\rmd x=\iint_{G_1^\delta}Q(x,y)\rmd y\rmd x
   \end{equation}
   for any fixed small $\delta>0$.
   Due to the symmetry of the above under the exchange of $x$ and $y$, it is enough to establish the following two equalities:
  \begin{equation}\label{eq,only u}
    \lim_{\varezero}\frac{1}{\varepsilon^{d+\alpha}}\iint_{G_1^\delta} p\big(\frac{x-y}{\varepsilon}\big)\blv(x,y)\uep(x)\varphi(x)\rmd y\rmd x=\iint_{G_1^\delta}\Lambda^{\textup{eff}}(x,y)\frac{u(x)\varphi(x)}
    {\abs{x-y}^{d+\alpha}}\rmd y\rmd x,
  \end{equation}
  \begin{equation}\label{eq,u x and varphi y}
    \lim_{\varezero}\frac{1}{\varepsilon^{d+\alpha}}\iint_{G_1^\delta} p\big(\frac{x-y}{\varepsilon}\big)\blv(x,y)\uep(x)\varphi(y)\rmd y\rmd x=\iint_{G_1^\delta}\Lambda^{\textup{eff}}(x,y)\frac{u(x)\varphi(y)}
    {\abs{x-y}^{d+\alpha}}\rmd y\rmd x.
  \end{equation}
  We only prove \eqref{eq,u x and varphi y}, since the proof of \eqref{eq,only u} is analogous. We first claim that 
  \begin{equation}\label{eq,weak convergence of p and k}
  \frac{1}{\varepsilon^{d+\alpha}} p\big(\frac{z}{\varepsilon}\big) \to \frac{k\paren{z/\abs{z}}}{\abs{z}^{d+\alpha}} \quad\text{weakly in} \quad L^1_{\mathrm{loc}}(\rd\setminus\{0\})
  \end{equation}
  as $\varezero$.
  Indeed, for any compact set $E\Subset\rd\setminus\{0\}$ and $0<\varepsilon<r/M$ with $r:=\inf_{z\in E}\abs{z}>0$, we can use assumption \eqref{eq,h2 right} to obtain
  \begin{align}\label{eq,bound on E}
    \int_E\frac{1}{\varepsilon^{d+\alpha}} p\big(\frac{z}{\varepsilon}\big)\rmd z
    \leq\int_{\{z:\abs{z}\geq r\}}\frac{1}{\varepsilon^{d+\alpha}} p\big(\frac{z}{\varepsilon}\big)\rmd z \leq\frac{1}{\varepsilon^\alpha}\sum_{i=0}^{+\infty}\int_{\{z:2^{i}r/\varepsilon\leq\abs{z}\leq
    2^{i+1}r/\varepsilon\}}p(z)\rmd z\lesssim r^{-\alpha}.
  \end{align}
  This implies that $\frac{1}{\varepsilon^{d+\alpha}} p\big(\frac{z}{\varepsilon}\big)$ is bounded uniformly in $L^1(E)$. Moreover, for any $R_2>R_1>0$ and an open set $D\subset \sd$, it follows from assumption \eqref{H3} that 
  \begin{align*}
&\int_{\{z:R_1<\abs{z}<R_2,\tilde{z}\in D\}}\frac{1}{\varepsilon^{d+\alpha}} p\big(\frac{z}{\varepsilon}\big)\rmd z   =\int_{\{z:\frac{R_1}{\varepsilon}<\abs{z}<\frac{R_2}{\varepsilon},\tilde{z}\in D\}}\frac{1}{\varepsilon^\alpha}p(z)\rmd z\\&\to\frac{1}{\alpha}\paren{\frac{1}{R_1^\alpha}-\frac{1}{R_2^\alpha}}\int_{\tilde{z}\in D}k(s)\rmd S
    =\int_{R_1}^{R_2}\frac{\rmd r}{r^{\alpha+1}}\int_{\tilde{z}\in D}k(s)\rmd S\\
    &=\int_{\{z:R_1<\abs{z}<R_2,\tilde{z}\in D\}}\frac{k\paren{z/\abs{z}}}{\abs{z}^{d+\alpha}}\rmd z
  \end{align*}
  as $\varezero$.
  Hence, the claim \eqref{eq,weak convergence of p and k} is proved. Next, we apply this claim to prove
  \begin{equation}\label{eq,limit 1}
    \lim_{\varezero}\frac{1}{\varepsilon^{d+\alpha}}\iint_{G_1^\delta} p\big(\frac{x-y}{\varepsilon}\big)u(x)\varphi(y)\rmd y\rmd x=\iint_{G_1^\delta}k\big(\frac{x-y}{\abs{x-y}}\big)\frac{u(x)\varphi(y)}
    {\abs{x-y}^{d+\alpha}}\rmd y\rmd x.
  \end{equation} 
  By the change of variables $z=x-y$, we have
  \begin{align*}
    &\abs{\frac{1}{\varepsilon^{d+\alpha}}\iint_{G_1^\delta} p\big(\frac{x-y}{\varepsilon}\big)u(x)\varphi(y)\rmd y\rmd x-\iint_{G_1^\delta}k\big(\frac{x-y}{\abs{x-y}}\big)\frac{u(x)\varphi(y)}
    {\abs{x-y}^{d+\alpha}}\rmd y\rmd x} \\
    & \leq\int_{\abs{x}\leq\delta^{-1}}\abs{u(x)}\abs{\int_{\{z:\abs{z}\geq\delta,
    \abs{x}+\abs{x-z}\leq\delta^{-1}\}}\paren{\frac{1}{\varepsilon^{d+\alpha}} p\big(\frac{z}{\varepsilon}\big) - \frac{k\paren{z/\abs{z}}}{\abs{z}^{d+\alpha}} }\varphi(x-z)\rmd z}\rmd x.
  \end{align*}
  Observe that from \eqref{eq,weak convergence of p and k} and its proof, we deduce that the function
  \begin{equation*}
    g^{\varepsilon}(x):=\abs{\int_{\{z:\abs{z}\geq\delta,
    \abs{x}+\abs{x-z}\leq\delta^{-1}\}}\paren{\frac{1}{\varepsilon^{d+\alpha}} p\big(\frac{z}{\varepsilon}\big) - \frac{k\paren{z/\abs{z}}}{\abs{z}^{d+\alpha}} }\varphi(x-z)\rmd z}
  \end{equation*}
  tends pointwise to zero   as $\varezero$ and 
  is bounded above by $C\delta^{-d-\alpha}$  on the set $\{x:\abs{x}\leq\delta^{-1}\}$. Consequently,  \eqref{eq,limit 1} follows directly from the Lebesgue dominated convergence theorem.
  
  On the other hand, by the uniform boundedness of $\blv$ and Young's inequality for convolution, we derive
  \begin{align*}
   &\abs{\frac{1}{\varepsilon^{d+\alpha}}\iint_{G_1^\delta} p\big(\frac{x-y}{\varepsilon}\big)\blv(x,y)\uep(x)\varphi(y)\rmd y\rmd x-\frac{1}{\varepsilon^{d+\alpha}}\iint_{G_1^\delta} p\big(\frac{x-y}{\varepsilon}\big)\blv(x,y)u(x)\varphi(y)\rmd y\rmd x}\\
   &\lesssim \norm{\frac{1}{\varepsilon^{d+\alpha}}p\big(\frac{\cdot}{\varepsilon}\big)}_{L^1
   (\rd\setminus B_\delta)}\norm{\varphi}_{\ard}\norm{\uep-u}_{L^2(B_{\delta^{-1}})}
   \lesssim \delta^{-\alpha}\norm{\uep-u}_{L^2(B_{\delta^{-1}})},
  \end{align*} 
  where we have used condition \eqref{eq,h2 right} in the last inequality.  Together with the convergence of $\uep$ to $u$ in $\ardloc$, this yields
  \begin{equation}\label{eq,limit 2}
  \begin{aligned}
    &\frac{1}{\varepsilon^{d+\alpha}}\iint_{G_1^\delta} p\big(\frac{x-y}{\varepsilon}\big)\blv(x,y)\uep(x)\varphi(y)\rmd y\rmd x\\
    &=\frac{1}{\varepsilon^{d+\alpha}}\iint_{G_1^\delta} p\big(\frac{x-y}{\varepsilon}\big)\blv(x,y)u(x)\varphi(y)\rmd y\rmd x+o(1),
    \end{aligned}
  \end{equation}
  where $o(1)\to 0$ as $\varezero$. In view of \eqref{eq,limit 1} and \eqref{eq,limit 2}, \eqref{eq,u x and varphi y} follows once we show that 
  \begin{equation}\label{eq,limit 3}
  \begin{aligned}
    &\frac{1}{\varepsilon^{d+\alpha}}\iint_{G_1^\delta} p\big(\frac{x-y}{\varepsilon}\big)\blv(x,y)u(x)\varphi(y)\rmd y\rmd x\\
    &=\frac{\bar{\Lambda}}{\varepsilon^{d+\alpha}}\iint_{G_1^\delta} p\big(\frac{x-y}{\varepsilon}\big)u(x)\varphi(y)\rmd y\rmd x+o(1).
    \end{aligned}
  \end{equation} 
  By the density of $C_c^\infty(B_{\delta^{-1}})$ in $L^2(B_{\delta^{-1}})$, it suffices to prove the above for $u\in C_c^\infty(B_{\delta^{-1}})$.
  
    The proof relies on a partition of the domain $G_1^\delta$ into $\varepsilon$-cubes. Denote $\ikv=\varepsilon(x_k,y_k)+\varepsilon[-1/2,1/2]^{2d}$, where $x_k$, $y_k\in\zd$ and $k\in\n$. Let 
  \begin{equation*}
   \ndv:=\{k\in\n:\ikv\cap G_1^\delta\neq\varnothing\}\quad\text{and}\quad
   G:=\bigcup_{k\in\ndv}\ikv\setminus G_1^\delta.
  \end{equation*}
  Then, it is geometrically clear that for sufficiently small $\varepsilon>0$,
\begin{equation*}
  G_1^\delta\subset\bigcup_{k\in\ndv}\ikv \quad\text{and}\quad G\subset D_1\cup D_2,
\end{equation*}
where
\begin{equation*}
  D_1=\{(x,y):\delta-C(d)\varepsilon\leq\abs{x-y}\leq\delta,
 \abs{x}+\abs{y}\leq\delta^{-1}+C(d)\varepsilon\},
\end{equation*}
\begin{equation*}
  D_2=\{(x,y):\abs{x-y}\geq\delta-C(d)\varepsilon,
  \delta^{-1}\leq\abs{x}+\abs{y}\leq\delta^{-1}+C(d)\varepsilon\}.
\end{equation*}
This together with condition \eqref{eq,h2 right} allows us to show that
\begin{equation}\label{eq,errer of G1 and Ik}
  \frac{1}{\varepsilon^{d+\alpha}}\iint_{G_1^\delta} p\big(\frac{x-y}{\varepsilon}\big)\rmd y\rmd x=\frac{1}{\varepsilon^{d+\alpha}}\sum_{k\in\ndv}\iint_{\ikv} p\big(\frac{x-y}{\varepsilon}\big)\rmd y\rmd x+o(1).
  \end{equation}  
  Indeed,  we have
  \begin{align*}
    \frac{1}{\varepsilon^{d+\alpha}}\iint_{G} p\big(\frac{x-y}{\varepsilon}\big)\rmd y\rmd x
    &\leq\frac{1}{\varepsilon^{d+\alpha}}\iint_{D_1} p\big(\frac{x-y}{\varepsilon}\big)\rmd y\rmd x+\frac{1}{\varepsilon^{d+\alpha}}\iint_{D_2} p\big(\frac{x-y}{\varepsilon}\big)\rmd y\rmd x\\
    &\leq\varepsilon^{d-\alpha}\int_{\delta/\varepsilon-C(d)\leq\abs{z}\leq\delta/\varepsilon}
    p(z)\rmd z\int_{\abs{x}\leq2\delta^{-1}/\varepsilon}\rmd x\\
    &\quad+\varepsilon^{d-\alpha}\int_{\abs{z}\geq\delta/\varepsilon-C(d)}
    p(z)\rmd z\int_{\delta^{-1}/\varepsilon\leq\abs{x}+\abs{x-z}\leq\delta^{-1}/\varepsilon+C(d)}\rmd x.
  \end{align*}
  In view of assumption \eqref{H3}, we deduce that
  \begin{equation*}
    \varepsilon^{-\alpha}\int_{\delta/\varepsilon-C(d)\leq\abs{z}\leq\delta/\varepsilon}
    p(z)\rmd z=\frac{1}{\alpha}\paren{\frac{1}{\paren{\delta-C(d)\varepsilon}^\alpha}-
    \frac{1}{\delta^\alpha}}\int_{\sd}k(s)\rmd S+o(1)=o(1).
  \end{equation*}
  This clearly yields
  \begin{equation*}
    \varepsilon^{d-\alpha}\int_{\delta/\varepsilon-C(d)\leq\abs{z}\leq\delta/\varepsilon}
    p(z)\rmd z\int_{\abs{x}\leq2\delta^{-1}/\varepsilon}\rmd x=o(1).
  \end{equation*} 
  On the other hand, by virtue of assumption \eqref{eq,h2 right}, we derive
  \begin{equation*}
    \int_{\abs{z}\geq\delta/\varepsilon-C(d)}
    p(z)\rmd z\leq\int_{\abs{z}\geq\delta/(2\varepsilon)}
    p(z)\rmd z\lesssim C(\delta)\varepsilon^{\alpha}.
  \end{equation*}
 Observe that for all $z\in\rd$ and any fixed positive constant $C$, the Lebesgue measure of the set 
 $\{x:R\leq\abs{x}+\abs{x-z}\leq R+C\}$ is of order $R^{d-1}$ as $R\to\infty$. Thus, combining this fact with the above estimate, we obtain
 \begin{equation*}
   \varepsilon^{d-\alpha}\int_{\abs{z}\geq\delta/\varepsilon-C(d)}
    p(z)\rmd z\int_{\delta^{-1}/\varepsilon\leq\abs{x}+\abs{x-z}\leq\delta^{-1}/\varepsilon+C(d)}\rmd x
    \lesssim \varepsilon.
 \end{equation*}
  We therefore arrive at
\begin{equation*}
   \frac{1}{\varepsilon^{d+\alpha}}\iint_{G} p\big(\frac{x-y}{\varepsilon}\big)\rmd y\rmd x
    = o(1),
\end{equation*}
which implies \eqref{eq,errer of G1 and Ik}.
 
   We proceed to prove \eqref{eq,limit 3}.   Thanks to \eqref{eq,errer of G1 and Ik} and 
   \eqref{eq:Lamda condition periodic}, we have
   \begin{equation}\label{eq,discrete sum}
     \begin{aligned}
     &\frac{1}{\varepsilon^{d+\alpha}}\iint_{G_1^\delta} p\big(\frac{x-y}{\varepsilon}\big)\blv(x,y)u(x)\varphi(y)\rmd y\rmd x\\
     &=\frac{1}{\varepsilon^{d+\alpha}}\sum_{k\in\ndv}\iint_{\ikv} p\big(\frac{x-y}{\varepsilon}\big)\blv(x,y)u(x)\varphi(y)\rmd y\rmd x+o(1)\\
    &=\frac{1}{\varepsilon^{d+\alpha}}\sum_{k\in\ndv}u(x_k)\varphi(y_k)\iint_{\ikv} p\big(\frac{x-y}{\varepsilon}\big)\blv(x,y)\rmd y\rmd x+o(1).
   \end{aligned}
   \end{equation}
   Let 
 \begin{equation*}
   \tilde{p}_k=\fint_{\ikv}p\big(\frac{x-y}{\varepsilon}\big)\rmd y\rmd x=\iint_{Q_{x_k}\times Q_{y_k}}p(x-y)\rmd y\rmd x.
 \end{equation*}
 Then, on account of the $\varepsilon\zd$-periodicity of $\blv$, we derive
 \begin{equation}\label{eq,fen jie p}
  \iint_{\ikv} p\big(\frac{x-y}{\varepsilon}\big)\blv(x,y)\rmd y\rmd x=\bar{\Lambda}\tilde{p}_k\varepsilon^{2d}+\iint_{\ikv} \big(p\big(\frac{x-y}{\varepsilon}\big)-\tilde{p}_k\big)\blv(x,y)\rmd y\rmd x.
 \end{equation}
 Now let us deal with the latter term on the right-hand side of the above equality.
 By the change of variables $z=(x-y)/\varepsilon$ and in view of  \eqref{eq:Lamda condition periodic}, we have
 \begin{align*}
   &\abs{\iint_{\ikv} \big(p\big(\frac{x-y}{\varepsilon}\big)-\tilde{p}_k\big)\blv(x,y)\rmd y\rmd x}\\
   &\lesssim\varepsilon^{2d}\int_{Q_{y_k}}\int_{Q_{x_k-y}}\abs{p(z)-\int_{Q_{y_k}}
   \int_{Q_{x_k-\tilde{y}}}p(\tilde{z})\rmd \tilde{z}\rmd\tilde{y}}\rmd z\rmd y\\
   &\lesssim\varepsilon^{2d}\int_{Q_{y_k}}\int_{Q_{y_k}}\int_{Q_{x_k-y}}\abs{p(z)-
   \int_{Q_{x_k-\tilde{y}}}p(\tilde{z})\rmd \tilde{z}}\rmd z\rmd y\rmd\tilde{y}.
 \end{align*} 
 Observe that for $y$, $\tilde{y}\in Q_{y_k}$, $\abs{x_k-y-(x_k-\tilde{y})}\leq\sqrt{d}$. Also observe that for all $k\in\ndv$ and $y\in Q_{y_k}$, $\abs{x_k-y}\geq\delta/(2\varepsilon)$, where we use the relation $\varepsilon\ll\delta$. Hence, we can apply assumption \eqref{H4} to deduce that
 \begin{equation*}
   \abs{\iint_{\ikv} \big(p\big(\frac{x-y}{\varepsilon}\big)-\tilde{p}_k\big)\blv(x,y)\rmd y\rmd x}\lesssim\phi(\frac{\delta}{2\varepsilon})\iint_{\ikv} p\big(\frac{x-y}{\varepsilon}\big)\rmd y\rmd x.
 \end{equation*}
 Combing this with \eqref{eq,errer of G1 and Ik}, we obtain
 \begin{equation}\label{eq,p-tilde p}
   \begin{aligned}
     & \abs{\frac{1}{\varepsilon^{d+\alpha}}\sum_{k\in\ndv}u(x_k)\varphi(y_k)\iint_{\ikv}
     \big(p\big(\frac{x-y}{\varepsilon}\big)-\tilde{p}_k\big)\blv(x,y)\rmd y\rmd x} \\
     &\lesssim\frac{1}{\varepsilon^{d+\alpha}}\phi(\frac{\delta}{2\varepsilon})
     \norm{u}_{\linftyrd}
     \norm{\varphi}_{\linftyrd}\sum_{k\in\ndv}\iint_{\ikv} p\big(\frac{x-y}{\varepsilon}\big)\rmd y\rmd x \\
     &=\phi(\frac{\delta}{2\varepsilon})
     \norm{u}_{\linftyrd}
     \norm{\varphi}_{\linftyrd}\paren{\iint_{G_1^\delta} \frac{1}{\varepsilon^{d+\alpha}}p\big(\frac{x-y}{\varepsilon}\big)\rmd y\rmd x+o(1)}
   \end{aligned}
 \end{equation}
 Observe that applying the argument in \eqref{eq,bound on E}, we deduce that
 \begin{equation*}
  \iint_{G_1^\delta} \frac{1}{\varepsilon^{d+\alpha}}p\big(\frac{x-y}{\varepsilon}\big)\rmd y\rmd x\lesssim
   \frac{1}{\varepsilon^{\alpha}}\int_{\abs{x}\leq\delta^{-1}}\rmd x\int_{\abs{z}\geq\delta
   /\varepsilon}p(z)\rmd z\lesssim\delta^{-d-\alpha}.
 \end{equation*}
 Inserting this into \eqref{eq,p-tilde p} and using the assumption $\phi(\frac{\delta}{2\varepsilon})\to0$ as $\varezero$ from \eqref{H4}, \eqref{eq,p-tilde p} becomes
 \begin{equation}\label{eq,p-tilde p last edition}
   \frac{1}{\varepsilon^{d+\alpha}}\sum_{k\in\ndv}u(x_k)\varphi(y_k)\iint_{\ikv}
     \big(p\big(\frac{x-y}{\varepsilon}\big)-\tilde{p}_k\big)\blv(x,y)\rmd y\rmd x=o(1).
 \end{equation}
 The combination of \eqref{eq,discrete sum}, \eqref{eq,fen jie p}, and \eqref{eq,p-tilde p last edition} yields
 \begin{align*}
   \frac{1}{\varepsilon^{d+\alpha}}\iint_{G_1^\delta}& p\big(\frac{x-y}{\varepsilon}\big)\blv(x,y)u(x)\varphi(y)\rmd y\rmd x=\frac{1}{\varepsilon^{d+\alpha}}\sum_{k\in\ndv}u(x_k)\varphi(y_k)
   \bar{\Lambda}\tilde{p}_k\varepsilon^{2d}\\
   &=\frac{\bar{\Lambda}}{\varepsilon^{d+\alpha}}\sum_{k\in\ndv}\iint_{\ikv} u(x)\varphi(y) p\big(\frac{x-y}{\varepsilon}\big)\rmd y\rmd x+o(1)\\
   &=\frac{\bar{\Lambda}}{\varepsilon^{d+\alpha}} \iint_{G_1^\delta}u(x)\varphi(y) p\big(\frac{x-y}{\varepsilon}\big)\rmd y\rmd x+o(1),
 \end{align*}
 where we have used \eqref{eq,errer of G1 and Ik} in the last equality. This establishes \eqref{eq,limit 3} and hence completes the proof of the lemma. 
\end{proof}

\subsection[Homogenization in L2Rd]{Homogenization in $\ard$}\label{subsection3}

In the previous section, we showed that, up to a subsequence, $\uep\to u$ in $\ardloc$ as $\varezero$. In this subsection, we prove that the convergence holds globally in $\ard$. Owing to the uniqueness of the solution to the limiting equation, this will imply the convergence of the whole sequence and thus complete the proof of Theorem \ref{thm:periodic theorem}. The global $\ard$ convergence relies on the following lemma: 
\begin{lemma}\cite[Corollary 4.27]{MR2759829}\label{lemma,Kolmogorov global compactness}
  Under the assumptions of Lemma \ref{lemma,Kolmogorov compactness}, further assume that  for any $\kappa>0$, there exists a bounded and measurable  subset $D\subset\rd$ such that $\norm{u}_{L^q(\rd\setminus D)}<\kappa$ for all $u\in\mathcal{U}$.
  Then   $\mathcal{U}$ has compact closure  in $\qqrd$.
  \end{lemma}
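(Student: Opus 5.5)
This is the standard Kolmogorov–Riesz–Fréchet criterion on all of $\rd$, so the natural route is to reduce to Lemma \ref{lemma,Kolmogorov compactness}. We must show that $\mathcal{U}$ is totally bounded in $\qqrd$.

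\textbf{Step 1.} Fix $\kappa>0$ and choose the bounded measurable set $D$ given by the hypothesis, so that
\[
\norm{u}_{L^q(\rd\setminus D)}<\kappa \quad \text{for all } u\in\mathcal{U}.
\]

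\textbf{Step 2.} Since $D$ has finite measure, Lemma \ref{lemma,Kolmogorov compactness} shows that the closure of $\mathcal{U}|_D$ is compact in $L^q(D)$. Hence there exist finitely many functions $g_1,\dots,g_N\in L^q(D)$ such that every $u|_D$ lies within $\kappa$ of some $g_j$ in $L^q(D)$.

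\textbf{Step 3.} Extend each $g_j$ by zero outside $D$, calling the extension $\tilde g_j$. For each $u\in\mathcal{U}$, pick $j$ with $\norm{u-g_j}_{L^q(D)}<\kappa$. Then
\[
\norm{u-\tilde g_j}_{\qqrd}\le \norm{u-g_j}_{L^q(D)}+\norm{u}_{L^q(\rd\setminus D)}<2\kappa .
\]
So $\mathcal{U}$ is covered by finitely many balls of radius $2\kappa$. Since $\kappa>0$ was arbitrary, $\mathcal{U}$ is totally bounded.

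\textbf{Step 4.} Because $\qqrd$ is complete, total boundedness of $\mathcal{U}$ implies that its closure is compact.

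There is no real obstacle here; all the analytic content sits in Lemma \ref{lemma,Kolmogorov compactness}. The only point requiring care is Step 3: the finite net obtained on $D$ has to be extended by zero to produce a net in $\qqrd$, and it is the uniform tail smallness from Step 1 that controls the error this extension introduces.
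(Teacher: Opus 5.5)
Your proposal is correct and is essentially the standard proof of \cite[Corollary 4.27]{MR2759829}, which the paper cites rather than proves. That proof takes a finite $\kappa$-net of $\mathcal{U}|_D$ in $L^q(D)$ from Lemma \ref{lemma,Kolmogorov compactness}, extends the centers by zero, and uses the uniform tail bound to obtain a $2\kappa$-net in $L^q(\mathbb{R}^d)$, so that total boundedness and completeness give compact closure.
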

  \begin{proof}[Proof of Theorem \ref{thm:periodic theorem}]
    Let  $\psi_n(x):=\psi(x/n)$ be the scaling of a standard smooth exterior cutoff function $\psi$ supported in $\rd\setminus B_1$ with $\psi\equiv1$ on $\rd\setminus B_2$.
  As a consequence of the above lemma, to prove the global $\ard$ convergence, it suffices to show that 
  \begin{equation}\label{eq,psi uep 2norm}
    \sup_{0<\varepsilon<1}\iintrdd\psi_n(x)(\uep(x))^2\rmd x\to0,\quad\text{as}\quad n\to\infty.
  \end{equation}
  For this purpose, testing equation \eqref{eq,equation of uep} against $\psi_n\uep$, we obtain
  \begin{equation}\label{eq,test by psi uep}
    \begin{aligned}
  &\frac{1}{2\varepsilon^{d+\alpha}}\iintrdd p\big(\frac{x-y}{\varepsilon}\big)\blv(x,y)\paren{\uep(y)-\uep(x)}\paren{\psi_n(y)\uep(y)
  -\psi_n(x)\uep(x)}\rmd y\rmd x\\
  &\quad+ m\intrd \paren{\uep(x)}^2\psi_n(x)\rmd x=\intrd f(x)\psi_n(x)\uep(x)\rmd x.
\end{aligned}
  \end{equation}
  Since $\psi_n\to0$ pointwise as $n\to\infty$, from the Cauchy-Schwarz inequality,   a priori estimate \eqref{eq,estimate of uep}, and the Lebesgue dominated convergence theorem, it follows that as $n\to\infty$,
  \begin{equation}\label{eq,f term tens 0}
    \abs{\intrd f(x)\psi_n(x)\uep(x)\rmd x}\leq\norm{\uep}_{\ard}\norm{f(x)\psi_n(x)}_{\ard}
    \lesssim\norm{f(x)\psi_n(x)}_{\ard}\to 0.
  \end{equation} 
  On the other hand, we rewrite the first integral in \eqref{eq,test by psi uep} as follows:
    \begin{equation}\label{eq,term1}
     \begin{aligned}
  &\frac{1}{\varepsilon^{d+\alpha}}\iintrdd p\big(\frac{x-y}{\varepsilon}\big)\blv(x,y)\paren{\uep(y)-\uep(x)}\paren{\psi_n(y)\uep(y)
  -\psi_n(x)\uep(x)}\rmd y\rmd x\\
  &=\frac{1}{\varepsilon^{d+\alpha}}\iintrdd p\big(\frac{x-y}{\varepsilon}\big)\blv(x,y)\paren{\uep(y)-\uep(x)}^2\psi_n(y)\rmd y\rmd x\\
  &\quad+\frac{1}{\varepsilon^{d+\alpha}}\iint_{\{(x,y):\abs{x-y}<M\varepsilon\}} p\big(\frac{x-y}{\varepsilon}\big)\blv(x,y)\paren{\uep(y)-\uep(x)}\paren{\psi_n(y)
  -\psi_n(x)}\uep(x)\rmd y\rmd x\\
  &\quad+\frac{1}{\varepsilon^{d+\alpha}}\iint_{\{(x,y):\abs{x-y}\geq M\varepsilon\}} p\big(\frac{x-y}{\varepsilon}\big)\blv(x,y)\paren{\uep(y)-\uep(x)}\paren{\psi_n(y)
  -\psi_n(x)}\uep(x)\rmd y\rmd x\\
  &=:I_1+I_2+I_3.
\end{aligned} 
    \end{equation}
    It is obvious that
\begin{equation}\label{eq,I1}
  I_1\geq0.
\end{equation}
In view of assumptions \eqref{eq,h1} and \eqref{eq:Lamda condition periodic},  the Cauchy-Schwarz inequality, a priori estimates \eqref{eq,p uep estimate} and \eqref{eq,estimate of uep} as well as the fact that $\abs{\psi_n(x)-\psi_n(y)}\leq C(d)n^{-1}\abs{x-y}$,   we derive
\begin{equation}\label{eq,I2}
  \begin{aligned}
  \abs{I_2}&\lesssim\paren{\frac{1}{\varepsilon^{d+\alpha}}\iintrdd p\big(\frac{x-y}{\varepsilon}\big)\paren{\uep(y)-\uep(x)}^2\rmd y\rmd x}^{1/2}\\
  &\quad\quad\times\paren{\frac{1}{\varepsilon^{d+\alpha}}\iint_{\{(x,y):\abs{x-y}<M\varepsilon\}} p\big(\frac{x-y}{\varepsilon}\big)\abs{\psi_n(y)
  -\psi_n(x)}^2\abs{\uep(x)}^2\rmd y\rmd x}^{1/2}\\
  &\lesssim n^{-1}\varepsilon^{1-\frac{\alpha}{2}}\paren{\intrd p(z)\rmd z\intrd\paren{\uep(x)}^2\rmd x}^{1/2}\lesssim n^{-1}.
  \end{aligned}
\end{equation}
 Similarly, we also have
\begin{equation*}
  \abs{I_3}\lesssim\paren{\frac{1}{\varepsilon^{d+\alpha}}\iint_{\{(x,y):\abs{x-y}\geq M\varepsilon\}} p\big(\frac{x-y}{\varepsilon}\big)\abs{\psi_n(y)
  -\psi_n(x)}^2\abs{\uep(x)}^2\rmd y\rmd x}^{1/2}
\end{equation*}
Using the fact that $\abs{\psi_n(y)-\psi_n(x)}\leq C(d)\min\{n^{-1}\abs{y-x},1\}$, the above inequality becomes
\begin{align*}
  \abs{I_3}&\lesssim\paren{\frac{1}{n^2\varepsilon^{d+\alpha}}\iint_{\{(x,y): M\varepsilon\leq\abs{x-y}\leq n\}} p\big(\frac{x-y}{\varepsilon}\big)\abs{x-y}^2\abs{\uep(x)}^2\rmd y\rmd x}^{1/2}\\
  &\quad\quad+\paren{\frac{1}{\varepsilon^{d+\alpha}}\iint_{\{(x,y): \abs{x-y}\geq n\}} p\big(\frac{x-y}{\varepsilon}\big)\abs{\uep(x)}^2\rmd y\rmd x}^{1/2}\\
  &\lesssim n^{-1}\paren{\varepsilon^{2-\alpha}\int_{\{z: M\leq\abs{z}\leq n/\varepsilon\}}p(z)\abs{z}^2\rmd z}^{1/2}+\paren{\varepsilon^{-\alpha}\int_{\{z: \abs{z}\geq n/\varepsilon\}}p(z)\rmd z}^{1/2}.
\end{align*}
Repeating the argument in \eqref{eq, estimate pz z2}, we have
\begin{equation*}
 \int_{\{z: M\leq\abs{z}\leq n/\varepsilon\}}p(z)\abs{z}^2\rmd z\lesssim
  \paren{\frac{n}{\varepsilon}}^{2-\alpha}\quad\text{and}\quad
  \int_{\{z: \abs{z}\geq n/\varepsilon\}}p(z)\rmd z\lesssim\paren{\frac{n}{\varepsilon}}^{-\alpha}.
\end{equation*}
We therefore obtain 
\begin{equation}\label{eq,I3}
  \abs{I_3}\lesssim n^{-\frac{\alpha}{2}}.
\end{equation}
Putting together \eqref{eq,test by psi uep}--\eqref{eq,I3}, we arrive at \eqref{eq,psi uep 2norm}. This completes the proof.
  \end{proof}
 \subsection{Proof of Theorem \ref{thm:locally periodic theorem}} 
 The proof of Theorem \ref{thm:locally periodic theorem} follows the same argument as that of Theorem \ref{thm:periodic theorem} developed in Subsections \ref{subsection1}--\ref{subsection3}, with minor modifications. Specifically, the two key formulas  \eqref{eq,discrete sum} and \eqref{eq,fen jie p} are replaced by 
     \begin{align*}
     &\frac{1}{\varepsilon^{d+\alpha}}\iint_{G_1^\delta} p\big(\frac{x-y}{\varepsilon}\big)\blv(x,y)u(x)\varphi(y)\rmd y\rmd x\\
    &=\frac{1}{\varepsilon^{d+\alpha}}\sum_{k\in\ndv}u(x_k)\varphi(y_k)\iint_{\ikv} p\big(\frac{x-y}{\varepsilon}\big)\Lambda(x_k,y_k,x/\varepsilon,y/\varepsilon)\rmd y\rmd x+o(1),
   \end{align*}
   and
    \begin{align*}
  &\quad\iint_{\ikv} p\big(\frac{x-y}{\varepsilon}\big)\Lambda(x_k,y_k,x/\varepsilon,y/\varepsilon)\rmd y\rmd x\\
  &=\bar{\Lambda}(x_k,y_k)\tilde{p}_k\varepsilon^{2d}+\iint_{\ikv} \big(p\big(\frac{x-y}{\varepsilon}\big)-\tilde{p}_k\big)\Lambda(x_k,y_k,x/\varepsilon,y/\varepsilon)\rmd y\rmd x.
 \end{align*}
 These substitutions are justified by the equicontinuity  \eqref{eq, modulus of continuity} and  $\zd$-periodicity \eqref{eq,symmetric of locally periodic} of the coefficient $\Lambda(x,y,\xi,\eta)$. This replacement allows us to derive, in the present locally periodic
 coefficient setting, an analogue of equality \eqref{eq,limit 3}, namely,
 \begin{align*}
    &\frac{1}{\varepsilon^{d+\alpha}}\iint_{G_1^\delta} p\big(\frac{x-y}{\varepsilon}\big)\blv(x,y)u(x)\varphi(y)\rmd y\rmd x\\
    &=\frac{1}{\varepsilon^{d+\alpha}}\iint_{G_1^\delta} p\big(\frac{x-y}{\varepsilon}\big)\bar{\Lambda}(x,y)u(x)\varphi(y)\rmd y\rmd x+o(1).
    \end{align*}
 We note that all other steps remain valid under the symmetry and uniform boundedness assumptions (see \eqref{eq,locally periodic}) of $\Lambda(x,y,\xi,\eta)$. Therefore, with these two substitutions, the rest of the argument carries over with the same strategy.
\section*{Acknowledgments} 

The work of the third author is partially supported by Young Scientific and Technological Talents (Level Three) in Tianjin and Tianjin Natural Science Foundation Project (No. 25JCQNJC01400) and the work of the corresponding author is partially supported by National Natural Science Foundation of China (NSFC Grant No. 12571103).
%\printbibliography

\bibliography{ref4}

\end{document}